\newtheorem{theorem}{Theorem}[section]
\newtheorem{lemma}[theorem]{Lemma}
\theoremstyle{definition}
\newtheorem{definition}[theorem]{Definition}
\newtheorem{remark}{Remark}
\newcommand{\eps}[1]{{#1}_{\varepsilon}}
\newcommand{\N}{\Bbb N}
\newcommand{\R}{\Bbb R}
\newcommand{\C}{\Bbb C}
\newcommand{\stress}{{T^E}}
\newcommand{\vstress}{{S}}
\def\Id{\mathbf{Id}}
\def\eps{\varepsilon}
\def\dist{\operatorname{dist}}
\def\XXint#1#2#3{{\setbox0=\hbox{$#1{#2#3}{\int}$}
     \vcenter{\hbox{$#2#3$}}\kern-.5\wd0}}
\newcommand{\cm}{\color{black}}
\newcommand{\BBB}{\color{black}} 
\newcommand{\EEE}{\color{black}}
\newcommand{\PPP}{\color{black}} 
\title[Numerical approximation of viscoelastic plates] 
      {Numerical approximation of  von K\'{a}rm\'{a}n viscoelastic plates}
\author[Manuel Friedrich, Martin Kru\v{z}\'{\i}k and Jan Valdman]{}
\subjclass{Primary: 74D05, 74D10, 35A15, 35Q74, 49J45; Secondary: 49S05}
 \keywords{Viscoelasticity, metric gradient flows, $\Gamma$-convergence, dissipative distance, minimizing movements, numerical approximation}
 \email{manuel.friedrich@uni-muenster.de}
 \email{kruzik@utia.cas.cz}
 \email{jan.valdman@utia.cas.cz}
\thanks{$^*$ Corresponding author: Martin Kru\v{z}\'{\i}k}
\begin{document}
\maketitle

\centerline{\scshape Manuel Friedrich}
\medskip
{\footnotesize
 \centerline{Institute for Computational and Applied Mathematics}
   \centerline{University of  M\"{u}nster}
   \centerline{Einsteinstr.~62, D-48149 M\"{u}nster, Germany}
} 

\medskip

\centerline{\scshape Martin Kru\v{z}\'ik$^*$}
\medskip
{\footnotesize
 \centerline{Czech Academy of Sciences, Institute of Information Theory and Automation}
   \centerline{Pod vod\'arenskou v\v{e}\v{z}\'i 4, CZ-182 08 Praha 8, Czechia}
   \centerline{Faculty of Civil Engineering, Czech Technical University}
	  \centerline{Th\'akurova 7, CZ-166 29 Praha 6, Czechia}  
}

\medskip

\centerline{\scshape Jan Valdman}
\medskip
{\footnotesize
 \centerline{Czech Academy of Sciences, Institute of Information Theory and Automation}
   \centerline{Pod vod\'arenskou v\v{e}\v{z}\'i 4, CZ-182 08 Praha 8, Czechia}
   \centerline{Institute of Mathematics,  University of South Bohemia}
	  \centerline{Brani\v{s}ovsk\'{a} 1760, CZ-370 05 \v{C}esk\'{e} Bud\v{e}jovice, Czechia}  
}

\bigskip


\bigskip

\centerline{{\it This paper is dedicated to Alexander Mielke in the occasion of his 60th birthday.}}

\begin{abstract}
We consider metric gradient flows and their discretizations in time and space. \BBB We prove an abstract  convergence result for time-space discretizations and identify their limits as curves of maximal slope. As an application, we consider \EEE  a finite element approximation of a quasistatic evolution \BBB for \EEE viscoelastic von K\'{a}rm\'{a}n plates \cite{VonKarman}. Computational experiments are provided, too.    
\end{abstract}

\section{Introduction}
Neglecting inertia,  a   nonlinear viscoelastic material in Kelvin's-Voigt's rheology (i.e., a spring and a dashpot coupled in parallel)  obeys the following system of equations 
\begin{align}\label{eq:viscoel}
-{\rm div}\Big(\partial_FW(\nabla y)   + \partial_{\dot F}R(\nabla y,\partial_t \nabla y)  \Big) =  f\text{ in $[0,T]\times \Omega$.} \end{align}
Here, $[0,T]$ is the process time interval with $T>0$,   $\Omega\subset\R^3$  is a smooth bounded domain representing the reference configuration,  and  $y:[0,T]\times \Omega\to\R^3$ is the deformation mapping  with corresponding   deformation gradient $\nabla y$.  Further, \EEE $W:\R^{3\times 3}\to  [0,\infty]\EEE$ is a stored energy density, which represents a potential of the first Piola-Kirchhoff stress tensor $\stress$, i.e.,  $\stress:=\partial_F W$, and  $F\in\R^{3\times 3}$ is the placeholder for $\nabla y$. Moreover, $R:  \R^{3 \times 3} \times \R^{3 \times 3} \to [0,\infty) \EEE $ denotes a (pseudo)potential of dissipative forces,  where $\dot F \in \R^{3 \times 3}$ is the placeholder of $\partial_t \nabla y$. Finally,  \EEE    $f:\Omega\to\R^3$ is a volume density of  external forces acting on $\Omega$.  

A standard assumption for $W$ is frame indifference, i.e.,  $W(F)=W(QF)$   for every proper rotation $Q\in{\rm SO}(3)$ and every $F\in\R^{3\times 3}$. This  implies that $W$ depends on the right Cauchy-Green strain tensor  $C:=F^\top F$, see e.g.~\cite{Ciarlet}. The second term on the left-hand side of \eqref{eq:viscoel} is the stress tensor 
$\vstress(F,\dot F):= \partial_{\dot F} R(F,\dot F)$ which has its origin in viscous dissipative mechanisms of the material.  Notice that its potential $R$ plays an  analogous role as $W$ in the case of purely elastic, i.e., non-dissipative processes. Naturally, we require that $R(F,\dot F)\ge R(F,0)=0$.  The viscous stress tensor must comply with the time-continuous frame-indifference principle,  meaning that $\vstress(F,\dot F)=F\tilde\vstress(C,\dot C)$, where $\tilde\vstress$ is a symmetric matrix-valued function.   This condition constraints 
$R$ so that \cite{Antmann, Antmann:04,MOS}
\begin{align*}
R(F,\dot F)=\tilde R(C,\dot C)
\end{align*}
for some nonnegative function $\tilde R$.  In other words, \EEE $R$ must depend on the right Cauchy-Green strain tensor $C$ and its time derivative $\dot C$.

\BBB Recently,  in \cite{MFMK}, the first two authors proved the existence of weak solutions to equations of  the form  \eqref{eq:viscoel}   in three-dimensional nonlinear viscoelasticity for nonsimple materials. While the elastic properties of simple  elastic materials depend only on the first gradient, the notion of a  nonsimple (or second-grade) material \EEE refers to the fact that the elastic energy additionally depends  on the second gradient of the deformation. This concept, pioneered by  Toupin \cite{Toupin:62,Toupin:64},  has proved to be useful in modern mathematical elasticity, see e.g.~\cite{BCO,Batra, capriz,dunn,MR,MielkeRoubicek:16,Podio}. \BBB Adopting this setting currently appears  to be inevitable to establish the existence of solutions, see \EEE \cite{MFMK},  and  \cite{MOS} for a general discussion about the interplay between the elastic energy and viscous dissipation. \BBB We emphasize, however, that a main justification of the investigated model  is the observation   that, in the small strain limit, the problem leads to the standard system  of linear   viscoelasticity without second gradient.  

In the present paper, we are interested in the analysis of lower-dimensional analogs of \eqref{eq:viscoel} which are derived  by considering  \eqref{eq:viscoel} for thin viscoelastic plates and  by passing to the vanishing-thickness limit. Such studies, often referred to as \emph{dimension  reduction}, \EEE  play a significant  role in nonlinear analysis and numerics since they allow for simpler  computational approaches still  preserving main features of the full-dimensional system. 
In particular, it is important that  the relationship between the \BBB original models   and their lower-dimensional counterparts \EEE  is  made rigorous.  Usually, \BBB the main tools in a \EEE    variational setting are $\Gamma$-convergence \cite{DalMaso:93} \BBB and geometric \EEE rigidity estimates \cite{FrieseckeJamesMueller:02}.  We refer to \cite{Dret1,Dret2} for a derivation of membrane models     from three-dimensional elasticity or to \cite{Casarino,FrieseckeJamesMueller:02, hierarchy, Pantz} for  analogous \BBB approaches to plate theory. \EEE   

\BBB In the framework of nonsimple viscoelastic materials, such a  scenario was recently studied by the first two authors  in \cite{Friedrich-Kruzik-19}, where  a von K\'{a}rm\'{a}n-like  viscoelastic plate model has been identified as an effective 2D  dimension-reduction limit.    \BBB For this analysis, \EEE besides  rigidity estimates and $\Gamma$-convergence,  the main tools are gradient flows in metric spaces developed in \cite{AGS, Ortner,S1,S2}.  Although there are previous works on viscoelastic plates \cite{Bock1,Park}, some even including inertial effects  \cite{Bock2, Bock3}, their starting point is already a plate model. \BBB In contrast, \EEE \cite{Friedrich-Kruzik-19} provides a rigorous derivation from a three-dimensional model of viscoelasticity at finite strains \BBB by (i) showing the existence of solutions to the effective 2D system, and by (ii) by proving that these solutions are in a certain sense the limits of solutions to the 3D equations for vanishing thickness. \EEE

 The main aim of this contribution is to carry out a finite-element convergence  analysis of a fully discrete viscoelastic plate model and to \BBB investigate \EEE its behavior by computational experiments. \BBB As a byproduct, we also obtain an alternative existence proof for solutions to the effective 2D system. This analysis is based on proving an abstract  convergence result of time-space discretizations  to metric gradient flows, see Theorem~\ref{th:abstract convergence-numerics}. \EEE

\BBB  At many spots,  \EEE our   strategy relies   on results obtained in \cite{Friedrich-Kruzik-19} and on the theory of  gradient flows in metric spaces \cite{AGS} which provide us with  a robust approach to quasistatic evolutionary problems.  In particular, Theorem~\ref{th:abstract convergence-numerics} exploits a sequence of minimization problems to construct fully discrete approximations  (see \eqref{eq: ds-new1} \BBB and \eqref{eq: ds-new1NNN}) \EEE of curves of maximal slope which are then solutions to the \BBB viscoelastic \EEE plate equations.  
This makes the proof partially constructive and, at the same time, it suggests a numerical method to be used.

The plan of the paper is as follows. Section~\ref{sec:viscoel} reviews equations of nonlinear viscoelasticity \BBB in the framework of nonsimple materials \EEE and the resulting system for the von K\'{a}rm\'{a}n plates. Mathematical tools from the theory of gradient flows in metric spaces \cite{AGS}, such  as generalized minimizing movements  and curves of maximal slope \cm \cite{Am95,DGMT}, \EEE are introduced in Section~\ref{sec3}.  Moreover, Section~\ref{sec3}  contains our main abstract convergence result \BBB for time-space discretizations whose limits are curves of maximal slope, see Theorem \ref{th:abstract convergence-numerics}.  \EEE Section~\ref{sec:FEM} applies the  abstract results to \BBB  the 2D system of viscoelastic von K\'{a}rm\'{a}n plate equations, see \eqref{eq: equation-simp} below: we provide \EEE  an approximation of the original problem by a finite element method, see Theorem~\ref{maintheorem}. \BBB As a byproduct, this approximation result yields \EEE an alternative proof of the existence of \BBB solutions  to the \EEE viscoelastic plate model originally obtained in \cite{Friedrich-Kruzik-19}. Finally, Section~\ref{numexp} \BBB provides  \EEE computational examples  simulating the behavior of the \BBB viscoelastic  \EEE plate exposed to external \BBB forces. \EEE

We use standard notation for Lebesgue spaces,  $L^p(\Omega)$, which consist of  measurable maps on $\Omega\subset\R^d$, $d=2,3$, that are integrable with the $p$-th power (if $1\le p<+\infty$) or essentially bounded (if $p=+\infty$).    With $W^{k,p}(\Omega)$ we denote Sobolev spaces, i.e.,   linear spaces of  maps  which, together with their weak derivatives up to the order $k\in\N$, belong to $L^p(\Omega)$.  Further,  $W^{k,p}_0(\Omega)$ contains maps from $W^{k,p}(\Omega)$ having zero boundary conditions (in the sense of traces).  To emphasize the target space $\R^k$, $k=1,2,3$, we write $L^p(\Omega;\R^k)$. If $k=1$, we write $L^p(\Omega)$ as usual.   We refer to \cite{AdamsFournier:05} for more details on Sobolev spaces.  We  also denote the components of vector  functions $y$ by $y_1$, $y_2$, and  $y_3$, and so on. \BBB By $\Id$ we denote the identity matrix in $\R^{3 \times 3}$. \EEE  If $A\in\R^{3\times 3\times 3\times 3}$ and $e\in\R^{3\times 3}$, then $Ae\in\R^{3\times 3}$ is such that for $i,j\in\{1,2,3\}$ we define  $(Ae)_{ij}:=A_{ijkl}e_{kl}$ where we use Einstein's summation convention. An analogous  convention is used in similar  occasions, in the sequel.
Finally, at many spots, we closely follow the notation introduced in \cite{AGS} to ease readability of our work because the theory developed there is one of the main tools of our analysis.

\section{Equations of viscoelasticity in 3D and  2D }\label{sec:viscoel}
We first introduce a 3D setting following the setup in \cite{Friedrich-Kruzik-19, hierarchy}. We consider  a right-handed  orthonormal system $\{e_1,e_2,e_3\}$ \PPP and \EEE  $S \subset \R^2$ open, bounded with Lipschitz boundary, in the span of $e_1$ and $e_2$. \PPP  Let $h>0$ small. We consider  \EEE  \emph{deformations} $w: S \times (-\frac{h}{2},\frac{h}{2}) \to \R^3$. It is convenient to work in a fixed domain $\Omega = S \times I$ with $I:= (-\frac{1}{2},\frac{1}{2})$ and to rescale deformations according to $y(x) = w(x',hx_3)$, so that $y: \Omega \to \R^3$, where we use the abbreviation $x' = (x_1,x_2)$. We also introduce the notation $\nabla' y = y_{,1} \otimes e_1  +y_{,2} \otimes e_2$  for the in-plane gradient, and the scaled gradient
\begin{align}\label{eq: scaled1} 
\nabla_h y := \Big(\nabla' y, \frac{1}{h} y_{,3} \Big) = \nabla w.
\end{align}
Moreover, we define the scaled second gradient by 
\begin{align}\label{eq: scaled2} 
(\nabla^2_h y)_{ijk} := h^{-\delta_{3j} - \delta_{3k}} (\nabla^2 y)_{ijk} = (\nabla^2 w)_{ijk} =\partial^2_{jk} w_i \ \  \text{for $i,j,k \in \lbrace 1,2,3\rbrace$}, 
\end{align}
where $\delta_{3j},\delta_{3k}$ denotes the Kronecker delta. 

\smallskip

\textbf{Stored elastic energy  density  \EEE and body forces:} 
 We assume that $W: \R^{3 \times 3} \to [0,\infty]$ is a single-well, frame-indifferent stored energy  density \EEE with the usual assumptions in nonlinear elasticity. We   suppose  that there exists $c>0$ such that
\begin{align}\label{assumptions-W}
\begin{split}
(i)& \ \ W \text{ continuous and  $C^3$ \EEE in a neighborhood of $SO(3)$},\\
(ii)& \ \ \text{frame indifference: } W(QF) = W(F) \text{ for all } F \in \R^{3 \times 3}, Q \in SO(3),\\
(iii)& \ \ W(F) \ge c\dist^2(F,SO(3)), \  W(F) = 0 \text{ iff } F \in SO(3),
\end{split}
\end{align}
where $SO(3) = \lbrace Q\in \R^{3 \times 3}: Q^\top Q = \Id, \, \det Q=1 \rbrace$.  Moreover, for $p>3$, \EEE let $P: \R^{3\times 3 \times 3} \to [0,\infty]$ be a higher order perturbation satisfying 
\begin{align}\label{assumptions-P}
\begin{split}
(i)& \ \ \text{frame indifference: } P(QZ) = P(Z) \text{ for all } Z \in \R^{3 \times 3 \times 3}, Q \in SO(3),\\
(ii)& \ \ \text{$P$ is convex and $C^1$},\\
(iii)& \ \ \text{growth condition: For all $Z \in \R^{3 \times 3 \times 3}$ we have } \\&   \ \ \ \ \ \    c_1 |Z|^p \le P(Z) \le c_2 |Z|^p, \ \ \ \ \ \ |\partial_{Z} P(Z)|  \le c_2 |Z|^{p-1} 
\end{split}
\end{align}
for $0<c_1<c_2$. Finally, $f \in L^\infty(\Omega)$ denotes a volume normal force,   i.e., a force oriented in the  $e_3$ direction.    

\smallskip

\textbf{Dissipation potential and viscous stress:} We now introduce a dissipation potential. We follow here the discussion in \cite[Section 2.2]{MOS} and \cite[Section 2]{Friedrich-Kruzik-19}.  Consider a time-dependent deformation $y: [0,T] \times \Omega \to \R^3$. Viscosity is not only related to the strain rate $\partial_t \nabla_h  y(t,x)$  but also to the strain $\nabla_h y(t,x)$.  It  can be expressed in terms of a  dissipation potential $R(\nabla_h y, \partial_t \nabla_h y)$, where $R: \R^{3 \times 3} \times \R^{3 \times 3} \to [0,\infty)$. An admissible potential has to satisfy frame indifference in the sense (see \cite{Antmann, MOS})
\begin{align}\label{R: frame indiff}
R(F,\dot{F}) = R(QF,Q(\dot{F} + AF))  \ \ \  \forall  Q \in SO(3), A \in \R^{3 \times 3}_{\rm skew}
\end{align}
for all $F \in GL_+(3)$ and $\dot{F} \in \R^{3 \times 3}$, where $GL_+(3) = \lbrace F \in \R^{3 \times 3}: \det F>0 \rbrace$ and $\R^{3 \times 3}_{\rm skew} = \lbrace A  \in \R^{3 \times 3}: A=-A^\top \rbrace$. 

From the viewpoint of modeling,  it is  more  convenient to postulate the existence of a (smooth) global distance $D: GL_+(3) \times GL_+(3) \to [0,\infty)$ satisfying $D(F,F) = 0$ for all $F \in GL_+(3)$. From this, an associated dissipation potential $R$ can be calculated by
\begin{align}\label{intro:R}
R(F,\dot{F}) := \lim_{\eps \to 0} \frac{1}{2\eps^2} D^2(F+\eps\dot{F},F) = \frac{1}{4} \partial^2_{F_1^2} D^2(F,F) [\dot{F},\dot{F}]
\end{align}
for $F \in GL_+(3)$ and  $\dot{F} \in \R^{3 \times 3}$. Here, $\partial^2_{F_1^2} D^2(F_1,F_2)$ denotes the Hessian of  $D^2$ in the direction of $F_1$ at $(F_1,F_2)$, which is a fourth order tensor.  For some $c>0$ we suppose that $D$ satisfies 
\begin{align}\label{eq: assumptions-D}
(i) & \ \ D(F_1,F_2)> 0 \text{ if } F_1^\top F_1 \neq F_2^\top F_2,\notag \\
(ii) & \ \ D(F_1,F_2) = D(F_2,F_1),\\
(iii) & \ \ D(F_1,F_3) \le D(F_1,F_2) + D(F_2,F_3),\notag \\
(iv) & \ \ \text{$D(\cdot,\cdot)$ is $C^3$ in a neighborhood of $SO(3) \times SO(3)$},\notag 
\\
(v)& \ \ \text{Separate frame indifference: } D(Q_1F_1,Q_2F_2) = D(F_1,F_2)\notag \\
& \ \  \ \ \ \ \ \ \ \ \    \ \  \ \ \ \ \ \ \ \ \    \ \  \ \ \ \ \ \ \ \ \    \ \  \ \ \ \ \ \ \ \ \   \forall Q_1,Q_2 \in SO(3),  \ \forall F_1,F_2 \in GL_+(3),\notag\\ 
(vi) & \ \ \text{$D(F,\Id) \ge c\dist(F,SO(3))$  $\forall F \in \R^{3 \times 3}$ in a neighborhood of $SO(3)$}.\notag
\end{align}
Note that conditions (i)-(iii) state that $D$ is a true distance when restricted to symmetric matrices   with nonnegative determinants. We cannot expect more due to the separate frame indifference (v). We also point out that (v) implies \eqref{R: frame indiff} as shown in \cite[Lemma 2.1]{MOS}. Note that in our model we do not require any conditions of polyconvexity  \cite{Ball:77} neither for $W$ nor for $D$.  One possible example of $D$ satisfying 
\eqref{eq: assumptions-D} \BBB is \EEE  $D(F_1,F_2)=  |F_1^\top F_1-F_2^\top F_2|\EEE$.   This choice leads  to $R(F,\dot F)=| F^\top \dot F + \dot F^\top F \EEE |^2/2$.  For further examples  we refer  to \cite[Section 2.3]{MOS}.

\smallskip

\textbf{Equations of viscoelasticity in a rescaled  domain:} Following the study in \cite{Friedrich-Kruzik-19, lecumberry}, \BBB we introduce  \EEE the set of admissible configurations by
\begin{align}\label{eq: nonlinear boundary conditions}
\mathfrak{S}_h = \Big\{ y \in W^{2,p}(\Omega;\R^3): \ y(x',x_3) = \begin{pmatrix} x' \\ hx_3 \end{pmatrix} 
\text{for } x' \in \partial S, \ x_3 \in I \Big\},
\end{align} 
where $I=(-\frac{1}{2},\frac{1}{2})$. \BBB Note that in \cite{Friedrich-Kruzik-19, lecumberry} more general clamped boundary conditions are considered that are  not included here for the sake of simplicity.  \EEE       We formulate the equations of viscoelasticity  for a nonsimple material involving the perturbation $P$ (cf.\ \eqref{assumptions-P}). \EEE We introduce a differential operator associated with  $P$. To this end, we recall the notation of the scaled gradients in \eqref{eq: scaled1}-\eqref{eq: scaled2}. For $i,j \EEE \in \lbrace 1,2, 3\rbrace$, we denote by $(\partial_ZP(\nabla^2_h y))_{ij*}$ the vector-valued function  $((\partial_ZP(\nabla^2_h y))_{ijk})_{k=1,2,3}$. We also introduce the    scaled (distributional) divergence  ${\rm div}_h g$  for a function $g \in L^1(\Omega;\R^3)$  by \EEE ${\rm div}_h g = \partial_1 g_1+ \partial_2 g_2 + \frac{1}{h}\partial_3 g_3$. We define
\begin{align*}
\big(\mathcal{L}^h_P(\nabla^2_h y)\big)_{ij} =  - {\rm div}_h (\partial_ZP(\nabla^2_h y))_{ij*}, \ \ \ \  i,j \EEE \in \lbrace 1,2, 3\rbrace
\end{align*}   
for $y \in \mathfrak{S}_h$. \BBB Let $0 < \beta < 4$. \EEE  The equations of nonlinear viscoelasticity  \BBB are defined by \EEE
\begin{align}\label{nonlinear equation}
\begin{cases} -  {\rm div}_h \Big( \partial_FW(\nabla_h y) +   h^{\beta} \EEE\mathcal{L}^h_{P}(\nabla^2_h y)  + \partial_{\dot{F}}R(\nabla_h y,\partial_t \nabla_h y)  \Big) =   h^{3}  fe_3  & \text{in } [0,\infty) \times \Omega \\
y(0,\cdot) = y_0 & \text{in } \Omega \\
y(t,\cdot) \in \mathfrak{S}_h &\text{for } t\in [0,\infty)
\end{cases}
\end{align}
for some $y_0 \in \mathfrak{S}_h$, where $\partial_FW(\nabla_h y)$  $+h^{\beta}\mathcal{L}^h_{P}(\nabla^2_h y)$ denotes the    \emph{first Piola-Kirchhoff stress tensor} and $\partial_{\dot{F}}R(\nabla_h y,\partial_t \nabla_h y)$ the \emph{viscous stress} with $R$ as introduced in \eqref{intro:R}.

\BBB 

We remark that the   scaling of the forces corresponds to the so-called \emph{von K\'arm\'an regime}. The choice $0<\beta<4$ ensures that  the second-gradient term in the energy vanishes in the effective 2D limiting model as $h\to 0$. \EEE

 \smallskip

\textbf{Quadratic forms:}
To formulate the effective 2D problem, we need to consider various quadratic forms. First, we define \EEE $Q_W^3:\R^{3 \times 3} \to \R$  by $Q_W^3(F) = \partial^2_{F^2} W(\Id)[F,F]$. One can show that it depends only on the symmetric part $\frac{1}{2}(F^\top + F)$ and that it is positive definite on $\R^{3 \times 3}_{\rm sym} = \lbrace A  \in \R^{3 \times 3}: A=A^\top \rbrace$. \EEE We also introduce  $Q_W^2: \R^{2 \times 2} \to \R$ by
\begin{align}\label{eq:Q2}
Q_W^2(G) = \min_{a \in\R^3} Q_W^3(G^* + a \otimes e_3 + e_3 \otimes a )
\end{align}
for $G \in \R^{2 \times 2}$, where the entries of  $G^* \in \R^{3 \times 3}$ are given by $G^*_{ij} = G_{ij}$ for $i,j\in \lbrace 1,2\rbrace$ and zero otherwise. Note that \eqref{eq:Q2} corresponds to a minimization over stretches in the $e_3$ direction. \BBB In  \cite{Friedrich-Kruzik-19} it was  assumed \EEE that the minimum in \eqref{eq:Q2} is attained for $a=0$. Similarly, we define
\begin{align}\label{eq:Q22}
Q_D^3(F) =  \frac{1}{2}\partial^2_{F^2_1} D^2(\Id,\Id)[F,F],   \ \ \   Q_D^2(G) = \min_{a \in\R^3} Q_D^3(G^* + a \otimes e_3 + e_3 \otimes a ).
\end{align}
We again assume that the minimum is attained for $a=0$. The assumption that $a=0$ is a minimum in \eqref{eq:Q2}-\eqref{eq:Q22} corresponds to a model with  zero Poisson's ratio
in the $e_3$ direction. This assumption is not needed in the \PPP purely \EEE static analysis \cite{hierarchy, lecumberry}. \BBB However, it  is adopted  in \cite{Friedrich-Kruzik-19} to simplify the study of the evolutionary problem. \EEE  We also introduce  corresponding   symmetric   fourth order tensors \BBB $\C^2_W$ and $\C^2_D$  by
\begin{align}\label{eq: order4}
Q_W^2(G) = \C^2_W[G,G],  \ \ \ \ \ \ \ Q_D^2(G) = \C^2_D[G,G] \ \ \ \  \ \ \forall G \in \R^{2 \times 2}.
\end{align}
One can check that  $Q^2_W$ and $Q^2_D$  are positive semi-definite, and  positive definite on $\R_{\rm sym}^{2\times 2}$.
\EEE

\smallskip

 \textbf{Equations of viscoelasticity in 2D:}
\BBB We now  present  the effective 2D equations which are formulated in terms of in-plane and out-of-plane displacements fields $u$ and $v$.   Following the discussion in \cite{hierarchy}, these displacement fields can be related to the deformation $y$ in the three-dimensional setting by 
\begin{align*} 
u(x')  := \frac{1}{h^2} \int_I \Big( \begin{pmatrix}
 y_1 \\  y_2  \end{pmatrix} (x',x_3) - \begin{pmatrix}
x_1\\ x_2 \end{pmatrix} \Big) \, dx_3, \ \ \ \ \   v(x') := \frac{1}{h} \int_I  y_3  (x',x_3)\, dx_3,
\end{align*}
where again $I=(-\frac{1}{2},\frac{1}{2})$.  \EEE  Let us consider  \BBB the set of admissible displacement fields \EEE
\begin{align}\label{eq: BClinear}
{\mathscr{S}} = \lbrace (u,v) \in W_0^{1,2}(S;\R^2) \times W_0^{2,2}(S)\rbrace.
\end{align}
\BBB (Compare with \eqref{eq: nonlinear boundary conditions}.)  From now on,  we are going to work exclusively on the domain $S\subset \R^2$ and therefore $\nabla$ will denote  the gradient with respect to $x_1$ and $x_2$, i.e., we will drop the apostrophe from the notation.  
\PPP 

Given $(u_0,v_0) \in \mathscr{S}$, \EEE we consider the equations 
\begin{align}\label{eq: equation-simp}
\begin{cases}  {\rm div}\Big(\C^2_W\big( e(u)  + \frac{1}{2} \nabla v \otimes \nabla v  \big) +  \C^2_D \big( e(\partial_t u) +   \nabla \partial_t v   \odot\EEE \nabla v \big) \Big)   = 0, &\vspace{0.1cm} \\ 
-{\rm div}\Big(\Big(\C^2_W\big( e(u)  + \frac{1}{2} \nabla v \otimes \nabla v \big)  +  \C^2_D \big( e(\partial_t u) +   \nabla \partial_t v  \odot \nabla v \big) \Big) \nabla v \Big)  &\vspace{0.1cm}\\
 \quad\quad\quad\quad\quad    + \tfrac{1}{12}  {\rm div} \, {\rm div}\Big( \C^2_W \nabla^2 v + \C^2_D \nabla^2 \partial_t v \Big)   =   f  & \hspace{-1.8cm} \text{in } [0,\infty) \times S \\
u(0,\cdot) = u_0, \  v(0,\cdot) = v_0 & \hspace{-1.8cm} \text{in } S \\
(u(t,\cdot), v(t,\cdot)) \in {\mathscr{S}}_0 & \hspace{-1.9cm} \text{ for } t\in [0,\infty)
\end{cases}
\end{align}
where $\C^2_W$ and  $\C^2_D$ are defined in \eqref{eq: order4},  and $\odot$ denotes the symmetrized tensor product. \EEE Note that the frame indifference of the energy and the dissipation (see \eqref{assumptions-W}(ii) and \eqref{eq: assumptions-D}(v), respectively) imply that the contributions only depend on the symmetric part of the strain $e(u) := \frac{1}{2}(  \nabla u  +(\nabla u)^\top)$ and the strain rate  $e(\partial_t u) := \frac{1}{2}( \partial_t \nabla u + \partial_t (\nabla u)^\top)$. Here, \BBB ${\rm div}$ \EEE denotes the  distributional   divergence in dimension two. 

We also say that $(u,v) \in W^{1,2}([0,\infty);{\mathscr{S}})$ is a \emph{weak solution} of \eqref{eq: equation-simp}  if $u(0,\cdot) = u_0$, $v(0,\cdot) = v_0$ and for a.e.\ $t \ge 0$ we have 
 \begin{subequations}\label{eq: weak equation}
\begin{align}
& \int_S \Big(\C^2_W\big( e(u)  + \tfrac{1}{2} \nabla v \otimes \nabla v  \big)  +  \C^2_D \big( e(\partial_t u ) +  \nabla \partial_t v   \odot\EEE \nabla v \big) \Big) : \nabla \varphi_u    = 0,\label{eq: weak equation1} \\
& \int_S \Big(\C^2_W\big( e(u)  + \tfrac{1}{2} \nabla v \otimes \nabla v  \big)    \Big) : \big(\nabla v  \odot  \nabla \varphi_v  \big) \notag \\
&   \quad\quad\quad\quad   +  \int_S \Big( \C^2_D \big( e(\partial_t u ) +  \nabla \partial_t v  \odot\nabla v \big) \Big) : \big(\nabla v  \odot  \nabla \varphi_v  \big) \notag \\
& \quad\quad\quad\quad    + \frac{1}{12}  \int_S \Big(\C^2_W \nabla^2 v + \C^2_D \nabla^2 \partial_t v \Big) : \nabla^2 \varphi_v = \int_S f\varphi_v, \label{eq: weak equation2} 
\end{align}
\end{subequations} 
for all $\varphi_u \in W^{1,2}_0(S;\R^2)$ and  $\varphi_v \in W^{2,2}_0(S)$. Note that \eqref{eq: weak equation1} corresponds to two and \eqref{eq: weak equation2} corresponds to one equation, respectively. It is proved in \cite[Thm.~2.2 and Thm.~2.3]{Friedrich-Kruzik-19} that solutions to a semidiscretized-in-time system  \eqref{nonlinear equation} converge to   \BBB weak  solutions (in the sense of \eqref{eq: weak equation}) to the initial-boundary value problem  \eqref{eq: equation-simp}.    \EEE
 
 The following   \emph{von K\'arm\'an energy functional} $\phi:\mathscr{S}\to\R$ and the  \emph{global dissipation distance} $\mathcal{D}:\mathscr{S}\times \mathscr{S}\to\R$ due to viscosity will play an important role in our analysis: we define 
\begin{align}\label{eq: phi0}
{\phi}(u,v) := \int_S \frac{1}{2}Q_W^2\Big( e(u) + \frac{1}{2} \nabla v \otimes \nabla v \Big) + \frac{1}{24}Q_W^2(\nabla^2 v) - \int_S f v
\end{align}
for $(u,v) \in {\mathscr{S}}$ and 
 \begin{align}\label{eq: D,D0-2} 
  {\mathcal{D}}( (u_0,v_0),(u_1,v_1)) & := \Big(\int_S Q^2_D\Big( e( u_1) - e(u_0) + \frac{1}{2} \nabla v_1 \otimes \nabla v_1 - \frac{1}{2} \nabla v_0 \otimes \nabla v_0 \Big) \notag \\
  & \ \ \ \ \  +  \frac{1}{12}   Q_D^2\big(\nabla^2 v_1 - \nabla^2 v_0 \big)  \Big)^{1/2}
\end{align}
for  $(u_0, v_0), (u_1,v_1) \in {\mathscr{S}}$.

\smallskip

\BBB In the next sections, we provide mathematical tools which will be used to show that fully discretized  solutions to  \eqref{eq: equation-simp} (i.e., discretized in time and space) \EEE converge to 
 weak solutions (in the sense of  \eqref{eq: weak equation}) to \eqref{eq: equation-simp}. 

\section{\BBB An abstract convergence result}\label{sec3}

  In this section we first recall the relevant definitions \BBB for metric gradient flows. Then, based on \cite{Ortner}, we prove an abstract convergence result of time-space discretizations  to curves of maximal slope. \EEE   

\subsection{Definitions: \BBB Curves of maximal slope and time-discrete solutions\EEE}\label{sec: defs}

We consider a   complete metric space $(\mathscr{S},\mathcal{D})$. We say a curve $u: (a,b) \to \mathscr{S}$ is \emph{absolutely continuous} with respect to $\mathcal{D}$ if there exists $m \in L^1(a,b)$ such that
$$
\mathcal{D}(u(s),u(t)) \le \int_s^t m(r) \, dr \ \ \   \text{for all} \ a \le s \le t \le b.
$$
The smallest function $m$ with this property, denoted by $|u'|_{\mathcal{D}}$, is called the \emph{metric derivative} of  $u$  and satisfies  for a.e.\ $t \in (a,b)$   (see \cite[Theorem 1.1.2]{AGS} for the existence proof)
$$|u'|_{\mathcal{D}}(t) := \lim_{s \to t} \frac{\mathcal{D}(u(s),u(t))}{|s-t|}.$$
We now define the notion of a \emph{curve of maximal slope}. We only give the basic definition here and refer to \cite[Section 1.2, 1.3]{AGS} for motivations and more details.  By  $h^+:=\max(h,0)$ we denote the positive part of a function  $h$.

\begin{definition}[Upper gradients, slopes, curves of maximal slope]\label{main def2} 
 We consider a   complete metric space $(\mathscr{S},\mathcal{D})$ with a functional $\phi: \mathscr{S} \to (-\infty,+\infty]$.

(i) A function $g: \mathscr{S} \to [0,\infty]$ is called a strong upper gradient for $\phi$ if for every absolutely continuous curve $v: (a,b) \to \mathscr{S}$ the function $g \circ v$ is Borel and 
$$|\phi(v(t)) - \phi(v(s))| \le \int_s^t g(v(r)) |v'|_{\mathcal{D}}(r)\,dr \  \ \  \text{for all} \ a< s \le t < b.$$

(ii) For each $u \in \mathscr{S}$ the local slope of $\phi$ at $u$ is defined by 
$$|\partial \phi|_{\mathcal{D}}(u): = \limsup_{w \to u} \frac{(\phi(u) - \phi(w))^+}{\mathcal{D}(u,w)}.$$

(iii) An absolutely continuous curve $u: (a,b) \to \mathscr{S}$ is called a curve of maximal slope for $\phi$ with respect to the strong upper gradient $g$ if for a.e.\ $t \in (a,b)$
$$\frac{\rm d}{ {\rm d} t} \phi(u(t)) \le - \frac{1}{2}|u'|^2_{\mathcal{D}}(t) - \frac{1}{2}g^2(u(t)).$$
\end{definition}

 We introduce time-discrete solutions for a \BBB functional $\phi: \mathscr{S} \to (-\infty,+\infty]$ and \EEE the metric $\mathcal{D}$ by solving suitable time-incremental minimization problems: consider a fixed time step $\tau >0$ and suppose that an initial datum $Y^0_{\tau}$ is given. Whenever $Y_{\tau}^0, \ldots, Y^{n-1}_{\tau}$ are known, $Y^n_{\tau}$ is defined as (if existent)
\begin{align}\label{eq: ds-new1}
Y_{\tau}^n = {\rm argmin}_{v \in \mathscr{S}} \  \Phi(\tau,Y^{n-1}_{\tau}; v), \ \ \ \Phi(\tau,u; v):=  \frac{1}{2\tau} \mathcal{D}(v,u)^2 + \phi(v). 
\end{align}
We suppose that for a choice of $\tau$ a sequence $(Y_{\tau}^n)_{n \in \N}$ solving  \eqref{eq: ds-new1} \EEE exists. Then we define the  piecewise constant interpolation by
\begin{align}\label{eq: ds-new2}
 \tilde{Y}_{\tau}(0) = Y^0_{\tau}, \ \ \ \tilde{Y}_{\tau}(t) = Y^n_{\tau}  \ \text{for} \ t \in ( (n-1)\tau,n\tau], \ n\ge 1.  
\end{align}
We call  $\tilde{Y}_{\tau}$  a \emph{time-discrete solution}. Note that the existence of such  solutions is usually guaranteed by the direct method of the calculus of variations under suitable compactness, coercivity, and lower semicontinuity assumptions.

\subsection{Curves of maximal slope as limits of time-space discretizations}

In this subsection we formulate a result about the approximation of curves of maximal slope. \BBB It \EEE is based on a result in \cite{Ortner} recalled in Subsection  \ref{sec: Ortner} below. We first state our assumptions. We again consider a complete metric space $(\mathscr{S},\mathcal{D})$ and a functional $\phi: \mathscr{S} \to [0,\infty]$. Although $\mathcal{D}$ naturally induces a topology \BBB on \EEE $\mathscr{S}$, it is often convenient to consider a weaker Hausdorff topology $\sigma$ on $\mathscr{S}$ to have more flexibility in the derivation of compactness properties (see \cite[Remark 2.0.5]{AGS}). We assume that \BBB for each $n \in \N$ \EEE there exists a  $\sigma$-sequentially   compact set   $K_N \subset \mathscr{S}$   such that  
\begin{align}\label{basic assumptions2-new}
\lbrace z  \in \mathscr{S}: \ \phi(z) \le N \rbrace \subset K_N.
\end{align}
Moreover, we suppose that the topology $\sigma$ satisfies  
\begin{align}\label{compatibility-new}
(i) & \ \ z_k \stackrel{\sigma}{\to} z, \ \  w_k \stackrel{\sigma}{\to} w  \ \ \  \Rightarrow \ \ \ \liminf_{k \to \infty} \mathcal{D}(z_k,w_k) \ge  \mathcal{D}(z,w), \notag \\
(ii) & \ \ z_k \stackrel{\sigma}{\to} z    \ \ \  \Rightarrow \ \ \ \liminf_{k \to \infty} \phi(z_k) \ge  \phi(z).
\end{align}
We further assume the existence of \emph{mutual recovery sequences}: for each sequence $z_k \stackrel{\sigma}{\to} z$  and $w \in \mathscr{S}$ there exists a sequence $(w_k)_k \subset \mathscr{S}$ such that
\begin{align}\label{eq: mutual recovery}
\limsup_{k\to \infty}\mathcal{D}(z_k,w_k) \le \mathcal{D}(z,w), \ \ \ \ \ \  \phi(z) - \phi(w) \le \liminf_{k \to \infty} \big(  \phi(z_k) - \phi(w_k)  \big).
\end{align}  
This condition is reminiscent of \cite[(2.1.37)]{MR}. \BBB We also point out that \EEE this assumption is weaker than the one considered in \cite[(2.26)-(2.27)]{Ortner}.

   We consider a sequence of  subspaces  $\mathscr{S}_k \subset \mathscr{S}$, $k \in \N$, such that each $\mathscr{S}_k$ is closed with respect to the topology $\sigma$. By $\rho$ we denote a stronger topology on $\mathscr{S}$ with the property that $\phi$ and $\mathcal{D}$ are continuous with respect to $\rho$.   We suppose that $\bigcup_k\mathscr{S}_k$ is $\rho$-dense in $\mathscr{S}$, i.e., for each $z\in \mathscr{S}$ we find a sequence $\BBB (z_k)_k \EEE \in \mathscr{S}$ such that 
\begin{align}\label{eq: strong convergence}
z_k \stackrel{\rho}{\to} z.
\end{align}
In our applications, $\mathscr{S}_k$ will represent finite element subspaces.

Finally, we require a property about \emph{geodesical convexity}: let $M>0$ and let   $\Theta^1_M, \Theta^2_M:[0,+\infty) \to [0,+\infty)$ be continuous, increasing functions which satisfy $\lim_{t \to 0} \Theta^1_M(t)/t = 1$ and $\lim_{t \to 0} \Theta^2_M(t)/t = 0$. We suppose that for all $z_0,z_1 \in\mathscr{S}$ with $\phi(z_0) \le M$ there exists a curve $(\gamma_s)_{s \in[0,1]} \subset \mathscr{S}$ with $\gamma_0 = z_0$ and $\gamma_1 = z_1$ such that 
\begin{align}\label{eq: goedesi convexi}
(i)& \ \    \mathcal{D}\big(z_0, \gamma_s\big)  \le  s \, \Theta^1_M\big(\mathcal{D}(z_0,z_1)\big), \notag \\
(ii) & \ \ \phi(\gamma_s)  \le (1-s) \phi(z_0) + s\phi(z_1) +s \, \Theta^2_M\big(\mathcal{D}(z_0,z_1)\big). 
\end{align}
Moreover, we assume that, if $z_0,z_1$ in $\mathscr{S}_k$, then $(\gamma_s)_{s \in[0,1]} \subset \mathscr{S}_k$, as well. In our applications, these curves will simply be convex combinations and, in this context, we will exploit that the finite element spaces $\mathscr{S}_k$ are obviously  convex sets.

\begin{remark}[Convexity assumption on $\Phi$]
We mention that the condition presented here \cm is tailor-made for our applications to the viscoelastic plate model since in this case we can find curves satisfying \eqref{eq: goedesi convexi} for specific $\Theta_M^1$ and $\Theta_M^2$, see Lemma \ref{th: convexity2} below.  We point out that the condition \EEE is slightly more general than the one used in   \cite[Assumption 2.4.5]{AGS} or \cite[Assumption 9]{Ortner}: fix $\lambda \in \R$ and let $\lambda^- = -\lambda$ for $\lambda \le 0$ and $1/\lambda^- = +\infty$ else.  We suppose that there exists a curve $(\gamma_s)_{s \in[0,1]} \subset \mathscr{S}$ with $\gamma_0 = z_0$ and $\gamma_1 = z_1$ such that for all $\tau \in (0, 1/\lambda^-)$ and all $s \in [0,1]$ there holds
\begin{align}\label{eq: goedesi convexi2}
\Phi(\tau,z_0; \gamma_s) \le (1-s)\Phi(\tau,z_0;\gamma_0) + s  \, \Phi(\tau,z_0;\gamma_1) -\frac{1}{2}\Big( \frac{1}{\tau} + \lambda \Big) s(1-s)  \mathcal{D}(z_0,z_1)^2.  
\end{align}
\BBB Note here that the curve $(\gamma_s)_{s}$ is chosen independently of $\tau$. \EEE A prototypical case is the case of $\lambda$-geodesically convex functionals $\phi$, see \cite[Definition 2.4.3]{AGS}.  We briefly check that \eqref{eq: goedesi convexi2} implies \eqref{eq: goedesi convexi}.

In view of \eqref{eq: ds-new1}, multiplying \eqref{eq: goedesi convexi2} with $2\tau$ and passing to the limit $\tau \to 0$ we obtain
$$\mathcal{D}(z_0, \gamma_s)^2 \le (1-s) \mathcal{D}(z_0,\gamma_0)^2     +s \mathcal{D}(z_0,\gamma_1)^2 - s(1-s)  \mathcal{D}(z_0,z_1)^2 = s^2 \mathcal{D}(z_0,z_1)^2,$$
i.e., \eqref{eq: goedesi convexi}(i) holds for $\Theta_M^1(t) = t$. On the other hand, for $\tau \nearrow 1/\lambda^-$, we get
\begin{align*}
\phi(\gamma_s)& \le \Phi(1/\lambda^-,z_0; \gamma_s) \le (1-s)\Phi(1/\lambda^-,z_0;\gamma_0) + s \Phi(1/\lambda^-,z_0;\gamma_1) \\
&\le (1-s)\phi(z_0) + s\phi(z_1) + s \frac{\lambda^-}{2} \mathcal{D}(z_0,z_1)^2,
\end{align*} 
 i.e., \eqref{eq: goedesi convexi}(ii) holds for $\Theta_M^2(t) = \frac{\lambda^-}{2} t^2$. \cm In this sense, \eqref{eq: goedesi convexi2} can be understood as a special case of \eqref{eq: goedesi convexi} with  $\Theta_M^1$ being the identity and $\Theta_M^2$ being quadratic.  \EEE
\end{remark}

We now state or main approximation result. Recall the definition of time-discrete solutions  $\tilde{Y}_{\tau}$ in 
\eqref{eq: ds-new1}-\eqref{eq: ds-new2}. We say that  $\tilde{Y}_{\tau}$ is a time-discrete solution in $\mathscr{S}_k$ if $\tilde{Y}_{\tau}(0) \in \mathscr{S}_k$ and  the minimization problem in \eqref{eq: ds-new1} is restricted to $\mathscr{S}_k$.

     \begin{theorem}\label{th:abstract convergence-numerics}
     Let  $(\mathscr{S},\mathcal{D})$ be a complete metric space and let $\phi: \mathscr{S} \to [0,\infty]$. Consider topologies $\tau$ and $\rho$ on $\mathscr{S}$ such that $\mathcal{D}$ and $\phi$ are continuous with respect to $\rho$. Consider $\sigma$-closed subspaces $\mathscr{S}_k\subset \mathscr{S}$ and suppose that   \eqref{basic assumptions2-new}-\eqref{eq: goedesi convexi} hold.   Consider a  null sequence $(\tau_k)_k$.  Let $\bar{z}_0 \in \mathscr{S}$. 
     
     Then there exist initial values  $(Y^0_{k,\tau_k})_k$ satisfying $Y^0_{k,\tau_k} \in \mathscr{S}_k$  and  
\begin{align}\label{eq: abstract assumptions1-new}
Y^0_{k,\tau_k} \stackrel{\sigma}{\to} \bar{z}_0 , \ \ \ \ \  \phi(Y^0_{k,\tau_k}) \to \phi(\bar{z}_0),
\end{align}
 sequences of time-discrete solutions $(\tilde{Y}_{k,\tau_k})_k$ in $\mathscr{S}_k$   starting from $(Y^0_{k,\tau_k})_k$, and a limiting curve  $z: [0,+\infty) \to \mathscr{S}$ such that up to a  subsequence (not relabeled) 
$$\tilde{Y}_{k,\tau_k}(t) \stackrel{\sigma}{\to} z(t), \ \ \ \ \ \phi(\tilde{Y}_{\tau_k}(t)) \to \phi(z(t)) \ \ \  \ \ \ \ \ \forall t \ge 0$$
as $k \to \infty$. The function $z$ is a curve of maximal slope for $\phi$ with respect to $|\partial \phi|_{\mathcal{D}}$. 

\end{theorem}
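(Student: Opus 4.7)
The strategy is to adapt the approximation scheme of \cite{Ortner} to the additional constraint that the time-discrete solutions live in the $\sigma$-closed subspaces $\mathscr{S}_k$. The argument proceeds in four main steps.

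\emph{Initial data.} I would construct $Y^0_{k,\tau_k} \in \mathscr{S}_k$ by invoking the density assumption \eqref{eq: strong convergence} together with a diagonal argument, so that $Y^0_{k,\tau_k} \stackrel{\rho}{\to} \bar{z}_0$. Since $\rho$ is stronger than $\sigma$ and $\phi$ and $\mathcal{D}$ are $\rho$-continuous, this yields \eqref{eq: abstract assumptions1-new} without further work.

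\emph{Discrete solutions and a priori bounds.} At each step $n$, the minimization problem \eqref{eq: ds-new1} restricted to $\mathscr{S}_k$ is solved by the direct method: $\mathscr{S}_k$ is $\sigma$-closed, $\phi$ and $\mathcal{D}$ are $\sigma$-lower semicontinuous by \eqref{compatibility-new}, and sublevels of $\phi$ are $\sigma$-sequentially compact by \eqref{basic assumptions2-new}. Chaining the minimality inequalities yields the telescoping estimate
\[
\phi(Y^N_{k,\tau_k}) + \sum_{n=1}^N \frac{1}{2\tau_k} \mathcal{D}(Y^n_{k,\tau_k}, Y^{n-1}_{k,\tau_k})^2 \le \phi(Y^0_{k,\tau_k}),
\]
which, together with \eqref{eq: abstract assumptions1-new} and the $\sigma$-compactness of sublevels, permits a Helly-type selection (cf.\ \cite[Prop.~3.3.1]{AGS}) producing a subsequence with $\tilde{Y}_{k,\tau_k}(t) \stackrel{\sigma}{\to} z(t)$ for every $t \ge 0$, where the limit curve $z$ is $\mathcal{D}$-absolutely continuous.

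\emph{Identification as curve of maximal slope.} To verify the defining inequality, the key input is a discrete slope estimate obtained from \eqref{eq: goedesi convexi}: testing the minimality of $Y^n_{k,\tau_k}$ against the curve $\gamma_s$ joining $Y^n_{k,\tau_k}$ to an arbitrary $w \in \mathscr{S}_k$, dividing by $s$, letting $s\to 0$, and exploiting the normalizations $\Theta^1_M(t)/t \to 1$ and $\Theta^2_M(t)/t \to 0$, bounds the discrete slope of $\phi$ at $Y^n_{k,\tau_k}$ (relative to $\mathscr{S}_k$) in terms of $\mathcal{D}(Y^n_{k,\tau_k}, Y^{n-1}_{k,\tau_k})/\tau_k$. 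Combined with the telescoping estimate, this gives a De Giorgi-type discrete energy-dissipation inequality whose passage to the limit rests on the $\sigma$-lower semicontinuity of the metric derivative from \eqref{compatibility-new}(i) and of $|\partial\phi|_{\mathcal{D}}$ deduced from \eqref{eq: mutual recovery}. Pointwise energy convergence $\phi(\tilde{Y}_{k,\tau_k}(t)) \to \phi(z(t))$ then follows by matching the monotonicity upper bound along discrete trajectories with the $\sigma$-lsc lower bound from \eqref{compatibility-new}(ii).

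\emph{Main obstacle.} The delicate point is the passage from the discrete slope, which quantifies only over competitors in the subspace $\mathscr{S}_k$, to the full slope $|\partial \phi|_{\mathcal{D}}$ at $z(t)$, which quantifies over all of $\mathscr{S}$. This is precisely the role of the mutual recovery sequence property \eqref{eq: mutual recovery} combined with the density \eqref{eq: strong convergence}: given the limit $z(t)$ and any competitor $w \in \mathscr{S}$, one constructs $w_k \in \mathscr{S}$, and then, after a further diagonalization, in $\mathscr{S}_k$, such that both $\mathcal{D}(\tilde{Y}_{k,\tau_k}(t), w_k)$ and $\phi(w_k)$ are asymptotically controlled by the corresponding quantities at the limit, thereby recovering slope information on $\mathscr{S}$ from the variational inequalities on $\mathscr{S}_k$.
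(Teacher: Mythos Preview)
Your outline is correct in its essentials and identifies the right obstacle, but it takes a noticeably different route from the paper. The paper does \emph{not} reconstruct the minimizing-movements compactness argument directly. Instead it proceeds modularly: it defines $\phi_k:\mathscr{S}\to[0,\infty]$ by $\phi_k=\phi$ on $\mathscr{S}_k$ and $+\infty$ otherwise, so that time-discrete solutions in $\mathscr{S}_k$ are exactly time-discrete solutions for $\phi_k$ on all of $\mathscr{S}$; it then invokes Ortner's abstract convergence result (Theorem~\ref{th:abstract convergence 2}) as a black box, after verifying its hypotheses. The crucial hypothesis is the $\sigma$-lower semicontinuity of slopes \eqref{eq: implication-2}, and here the paper relies on a separate lemma (Lemma~\ref{lemma: slopes}) giving the \emph{global} representation
\[
|\partial\phi|_{\mathcal{D}}(z)=\sup_{w\neq z}\frac{\big(\phi(z)-\phi(w)-\Theta^2_M(\mathcal{D}(z,w))\big)^+}{\Theta^1_M(\mathcal{D}(z,w))},
\]
valid for both $\phi$ and $\phi_k$ thanks to \eqref{eq: goedesi convexi}. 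With this sup-formula in hand, the mutual recovery sequences \eqref{eq: mutual recovery} (pushed into $\mathscr{S}_k$ via \eqref{eq: strong convergence} and $\rho$-continuity) yield \eqref{eq: implication-2} by a direct limit argument; the same lemma also certifies that $|\partial\phi|_{\mathcal{D}}$ is a strong upper gradient.

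Your approach instead unpacks the AGS machinery by hand: a priori bounds, Helly selection, a discrete slope estimate from testing minimality against the curves $\gamma_s$, and a discrete energy--dissipation inequality passed to the limit. This is viable, and your diagnosis of the main obstacle (upgrading competitors from $\mathscr{S}_k$ to $\mathscr{S}$ via mutual recovery plus density) matches the paper exactly. The trade-off is that the paper's route isolates the role of the generalized convexity \eqref{eq: goedesi convexi} once, in the slope-representation lemma, and then delegates all the compactness and energy-dissipation bookkeeping to Theorem~\ref{th:abstract convergence 2}; your route is more self-contained but requires you to redo that bookkeeping and to be careful that the discrete slope bound you extract (which involves $\mathcal{D}(\gamma_s,Y^{n-1})$ rather than $\mathcal{D}(\gamma_s,Y^n)$, so a triangle-inequality correction is needed) really does combine with the telescoping estimate to give the full De Giorgi inequality under the weaker assumption \eqref{eq: goedesi convexi} rather than the standard $\lambda$-convexity \eqref{eq: goedesi convexi2}.
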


\subsection{Curves of maximal slope as limits of time-discrete solutions}\label{sec: Ortner}

In this \BBB subsection \EEE we recall a result about the limits of time-discrete solutions obtained by {\sc Ortner} \cite{Ortner} which is the main ingredient for the proof of Theorem \ref{th:abstract convergence-numerics}.  We consider a set $\mathscr{S}$ and  a sequence of  metrics $(\mathcal{D}_k)_k$ on $\mathscr{S}$ as well as a limiting metric $\mathcal{D}$. We again assume that all metric spaces are complete. Moreover, let $(\phi_k)_k$ be a sequence of functionals with $\phi_k: \mathscr{S} \to [0,\infty]$.  

As before, we consider a Hausdorff topology $\sigma$ on $\mathscr{S}$ which is possibly weaker than the one induced by $\mathcal{D}$. 
We suppose that the topology $\sigma$ satisfies  
\begin{align}\label{compatibility}
\begin{split}
z_k \stackrel{\sigma}{\to} z, &\ \  w_k \stackrel{\sigma}{\to} w  \ \ \  \Rightarrow \ \ \ \liminf_{k \to \infty} \mathcal{D}_k(z_k,w_k) \ge  \mathcal{D}(z,w).
\end{split}
\end{align}
Moreover, assume that  \BBB for all $n \in \N$ \EEE  there exists a  $\sigma$-sequentially   compact set   $K_N \subset \mathscr{S}$   such that for all $k \in \N$
\begin{align}\label{basic assumptions2}
\lbrace  z: \ z \in \mathscr{S}, \ \phi_k(z) \le N \rbrace \subset K_N.
\end{align}
Specifically, for a sequence $(z_k)_k$ with $\phi_k(z_k) \le N$, we find a subsequence (not relabeled) and $z \in \mathscr{S}$ such that $z_k \stackrel{\sigma}{\to} z$. We suppose lower semicontinuity of the energies and the slopes in the following sense: for all $z \in \mathscr{S}$ and \BBB sequences \EEE $(z_k)_k$, $z_k \in \mathscr{S}_k$, we have
 \begin{subequations}\label{eq: implication}
\begin{align}
z_k \stackrel{\sigma}{\to}  z \ \ \ \ \  &\Rightarrow \ \ \ \ \  \liminf_{k \to \infty} \phi_{k}(z_{k}) \ge \phi(z), \label{eq: implication-1} \\
z_k \stackrel{\sigma}{\to}  z, \ \ \   \sup\nolimits_k \phi_k(z_k) <+\infty  \  \ \  &\Rightarrow \ \ \ \ \  \liminf_{k \to \infty} |\partial \phi_{k}|_{\mathcal{D}_{k}} (z_{k}) \ge |\partial \phi|_{\mathcal{D}} (z). \label{eq: implication-2}
\end{align}
 \end{subequations}
We remark that the condition in \cite[(2.10)]{Ortner} is slightly stronger than \eqref{eq: implication-2} since there the condition is required for all sequences and not only on sublevel sets of $\phi_k$. The following results remain true under the weaker assumption \eqref{eq: implication-2}, cf., e.g., \cite[Corollary 2.4.12]{AGS}. Note that nonnegativity of $\phi_k$ and $\phi$ can be generalized to a suitable \emph{coerciveness} condition, see \cite[(2.1.2b)]{AGS} or \cite[(2.5)]{Ortner}, which we do not include here for the sake of simplicity. We formulate the main convergence result of time-discrete solutions to curves of maximal slope, proved in \cite[Section 2]{Ortner}.


\begin{theorem}\label{th:abstract convergence 2}
Suppose that   \eqref{compatibility}-\eqref{eq: implication} hold. Moreover, assume that    $|\partial \phi|_{\mathcal{D}}$ is a  strong upper gradient for $ \phi $.   Consider a  null sequence $(\tau_k)_k$. Let   $(Y^0_{k,\tau_k})_k$ with $Y^0_{k,\tau_k} \in \BBB \mathscr{S} \EEE $  and $\bar{z}_0 \in \mathscr{S}$ be initial data satisfying 
\begin{align}\label{eq: abstract assumptions1}
(i)& \ \ \sup\nolimits_k \mathcal{D} \big(Y^0_{k,\tau_k},\bar{z}_0\big) < + \infty, \notag \\ 
(ii)& \ \  Y^0_{k,\tau_k} \stackrel{\sigma}{\to} \bar{z}_0 , \ \ \ \ \  \phi_k(Y^0_{k,\tau_k}) \to \phi(\bar{z}_0).
\end{align}
Then for each sequence of discrete solutions $(\tilde{Y}_{k,\tau_k})_k$ for $\phi_k$ and $\mathcal{D}_k$  starting from $(Y^0_{k,\tau_k})_k$, \BBB see \eqref{eq: ds-new1}-\eqref{eq: ds-new2}, \EEE there exists a limiting function $z: [0,+\infty) \to \mathscr{S}$ such that up to a  subsequence (not relabeled) 
$$\tilde{Y}_{k,\tau_k}(t) \stackrel{\sigma}{\to} z(t), \ \ \ \ \ \phi_k(\tilde{Y}_{\tau_k}(t)) \to \phi(z(t)) \ \ \  \ \ \ \ \ \forall t \ge 0$$
as $k \to \infty$, and $z$ is a curve of maximal slope for $\phi$ with respect to $|\partial \phi|_{\mathcal{D}}$. 

\end{theorem}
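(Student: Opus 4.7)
The plan is to reduce the theorem to a direct application of Theorem~\ref{th:abstract convergence 2} by encoding the constraint $v \in \mathscr{S}_k$ into an auxiliary functional. Specifically, I would set $\mathcal{D}_k := \mathcal{D}$ and
$$\phi_k(v) := \phi(v) \text{ if } v \in \mathscr{S}_k, \quad \phi_k(v) := +\infty \text{ otherwise,}$$
so that time-discrete solutions in $\mathscr{S}_k$ coincide with the discrete solutions of the incremental problem \eqref{eq: ds-new1} for $(\phi_k, \mathcal{D})$. Existence of such minimizers follows from the direct method using the $\sigma$-closedness of $\mathscr{S}_k$, the compactness \eqref{basic assumptions2-new}, and the lower semicontinuity \eqref{compatibility-new}. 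Well-prepared initial data $Y^0_{k,\tau_k} \in \mathscr{S}_k$ are obtained from the $\rho$-density \eqref{eq: strong convergence}: choose $Y^0_{k,\tau_k} \in \mathscr{S}_k$ with $Y^0_{k,\tau_k} \stackrel{\rho}{\to} \bar z_0$; then $\rho$-continuity of $\phi$ and $\mathcal{D}$ gives $\phi(Y^0_{k,\tau_k}) \to \phi(\bar z_0)$ and $\mathcal{D}(Y^0_{k,\tau_k}, \bar z_0) \to 0$, and since $\rho$ is stronger than $\sigma$, also $\sigma$-convergence.

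\textbf{Easy hypotheses.} The conditions \eqref{compatibility} and \eqref{basic assumptions2} for the pair $(\phi_k,\mathcal{D})$ follow immediately from \eqref{compatibility-new}(i) and the inclusion $\{\phi_k \le N\} \subset \{\phi \le N\} \subset K_N$. The energy lower semicontinuity \eqref{eq: implication-1} reduces to \eqref{compatibility-new}(ii) after noting that $\liminf_k \phi_k(z_k) < \infty$ forces $z_k \in \mathscr{S}_k$ along a subsequence, on which $\phi_k(z_k) = \phi(z_k)$.

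\textbf{Slope lower semicontinuity.} The main obstacle is \eqref{eq: implication-2}. Given $z_k \stackrel{\sigma}{\to} z$ with $\sup_k \phi(z_k) \le M$ and $L < |\partial \phi|_{\mathcal{D}}(z)$, I would pick $w \in \mathscr{S}$ with small $\mathcal{D}(z,w)$ and $(\phi(z)-\phi(w))/\mathcal{D}(z,w) > L$. Mutual recovery \eqref{eq: mutual recovery} delivers $w_k \in \mathscr{S}$ with $\limsup_k \mathcal{D}(z_k,w_k) \le \mathcal{D}(z,w)$ and $\liminf_k(\phi(z_k) - \phi(w_k)) \ge \phi(z) - \phi(w)$. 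A diagonal extraction from the $\rho$-density \eqref{eq: strong convergence} then yields $\hat w_k \in \mathscr{S}_k$ with $\mathcal{D}(w_k,\hat w_k) \to 0$ and $\phi(\hat w_k) - \phi(w_k) \to 0$. With $(z_k,\hat w_k) \in \mathscr{S}_k \times \mathscr{S}_k$ and $\phi(z_k) \le M$, the geodesic convexity \eqref{eq: goedesi convexi} produces a connecting curve $(\gamma_s^{(k)})_s \subset \mathscr{S}_k$ satisfying, for every $s \in (0,1)$,
$$\frac{(\phi(z_k) - \phi(\gamma_s^{(k)}))^+}{\mathcal{D}(z_k,\gamma_s^{(k)})} \ge \frac{(\phi(z_k) - \phi(\hat w_k))^+}{\Theta^1_M(\mathcal{D}(z_k,\hat w_k))} - \frac{\Theta^2_M(\mathcal{D}(z_k,\hat w_k))}{\Theta^1_M(\mathcal{D}(z_k,\hat w_k))},$$
the right-hand side being independent of $s$. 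Since $\mathcal{D}(z_k,\gamma_s^{(k)}) \to 0$ as $s \to 0^+$, this lower bounds $|\partial \phi_k|_{\mathcal{D}}(z_k)$; because $\hat w_k \in \mathscr{S}_k$, the variation indeed counts for $|\partial \phi_k|$. Taking $\liminf_k$, using continuity of $\Theta^1_M, \Theta^2_M$ together with the constructed bounds on $\mathcal{D}(z_k,\hat w_k)$ and $\phi(z_k)-\phi(\hat w_k)$, and finally letting $\mathcal{D}(z,w) \to 0$ so that $\Theta^1_M(t)/t \to 1$ and $\Theta^2_M(t)/t \to 0$, yields $\liminf_k |\partial \phi_k|_{\mathcal{D}}(z_k) \ge L$; then sending $L \nearrow |\partial \phi|_{\mathcal{D}}(z)$ closes the argument.

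\textbf{Conclusion.} The strong-upper-gradient property of $|\partial \phi|_{\mathcal{D}}$ required by Theorem~\ref{th:abstract convergence 2} is a standard consequence of geodesic convexity of the type \eqref{eq: goedesi convexi} (cf.~\cite[Thm.~2.4.9]{AGS}). Applying Theorem~\ref{th:abstract convergence 2} to $(\phi_k,\mathcal{D})$ with the prepared initial data then yields a subsequential $\sigma$-limit $z$ with $\phi(\tilde Y_{k,\tau_k}(t)) = \phi_k(\tilde Y_{k,\tau_k}(t)) \to \phi(z(t))$, and $z$ is a curve of maximal slope for $\phi$ with respect to $|\partial \phi|_{\mathcal{D}}$. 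The delicate point is precisely the slope bound: mutual recovery \eqref{eq: mutual recovery} only provides competitors in $\mathscr{S}$, not in $\mathscr{S}_k$, and it is the combination of $\rho$-density \eqref{eq: strong convergence} (to pull competitors into $\mathscr{S}_k$) with geodesic convexity \eqref{eq: goedesi convexi} (which keeps the interpolating curve inside $\mathscr{S}_k$ and converts the secant ratio into a slope bound) that bridges this gap.
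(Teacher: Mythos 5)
Your proposal does not prove the stated theorem: it is circular. The statement in question is Theorem \ref{th:abstract convergence 2} itself, yet your opening step is to ``reduce the theorem to a direct application of Theorem \ref{th:abstract convergence 2}.'' What you have actually written is a (reasonable, and in fact quite close to the paper's own) proof of the \emph{other} abstract result, Theorem \ref{th:abstract convergence-numerics}, which handles the space discretization by encoding the constraint $v\in\mathscr{S}_k$ into $\phi_k$ and then feeds the resulting sequence $(\phi_k,\mathcal{D})$ into Theorem \ref{th:abstract convergence 2} as a black box. Accordingly, the hypotheses you verify --- \eqref{basic assumptions2-new}, \eqref{compatibility-new}, \eqref{eq: strong convergence}, \eqref{eq: mutual recovery}, \eqref{eq: goedesi convexi} --- are the hypotheses of Theorem \ref{th:abstract convergence-numerics}, not the hypotheses \eqref{compatibility}--\eqref{eq: implication} of the theorem you were asked to prove, which concern a general sequence of functionals $\phi_k$ and metrics $\mathcal{D}_k$ with no finite-element or density structure whatsoever.

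The actual content of Theorem \ref{th:abstract convergence 2} is the convergence of the minimizing-movement scheme \eqref{eq: ds-new1}--\eqref{eq: ds-new2} to a curve of maximal slope under $\Gamma$-convergence-type assumptions on $(\phi_k,\mathcal{D}_k)$. The paper does not reprove it; it cites \cite[Propositions 5 and 6]{Ortner}. A genuine proof requires the De Giorgi minimizing-movements machinery: a priori bounds and the discrete energy--dissipation inequality obtained by testing the incremental minimization (sharpened via De Giorgi's variational interpolation), equicontinuity of the interpolants with respect to $\mathcal{D}$, a metric Arzel\`a--Ascoli/diagonal compactness argument using \eqref{basic assumptions2} and \eqref{eq: abstract assumptions1} to extract the pointwise $\sigma$-limit $z$, passage to the limit in the energy--dissipation inequality via the lower semicontinuity assumptions \eqref{compatibility} and \eqref{eq: implication} (in particular the lower semicontinuity of the slopes), and finally the strong-upper-gradient property to turn the integrated inequality into the pointwise maximal-slope inequality and to upgrade the $\liminf$ to convergence of the energies $\phi_k(\tilde{Y}_{k,\tau_k}(t))\to\phi(z(t))$. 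None of these steps appears in your proposal; without them the theorem is assumed rather than proved.
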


For the proof we refer to \cite[Proposition 5, 6]{Ortner}. We comment that this convergence result might seem weak at first glance since in the family of approximations there exists only a subsequence \BBB converging \EEE to a solution. In practice, \BBB however, \EEE this often does not cause problems, see \cite[Remark 7]{Ortner} for a thorough comment.

\subsection{Proof of Theorem \ref{th:abstract convergence-numerics}}

This subsection is devoted to the proof of Theorem \ref{th:abstract convergence-numerics}. Consider the complete metric spaces $(\mathscr{S},\mathcal{D})$ and $(\mathscr{S}_k,\mathcal{D})$, the functional $\phi:\mathscr{S} \to [0,+\infty]$,  and recall assumptions    \eqref{basic assumptions2-new}-\eqref{eq: goedesi convexi}.  We start with a representation of the local slope defined in Definition \ref{main def2}. We also define $\phi_k:\mathscr{S} \to [0,+\infty]$ by $\phi_k(z) = \phi(z)$ if $z \in \mathscr{S}_k$  
 and $\phi_k(z) = +\infty$ else.

\begin{lemma}[Representation of the local slope]\label{lemma: slopes}
\BBB Let $M>0$. \EEE The local slope for the energy ${\phi}$ in the complete metric space $(\mathscr{S},\mathcal{D})$ admits the representation 
$$ |\partial {\phi}|_{{\mathcal{D}}}(z) =  \sup_{ w  \neq z, \, w \in {\mathscr{S}}} \   \frac{\big({\phi}(z) - {\phi}(w) - \Theta^2_M\big({\mathcal{D}}(z,w) \big) \big)^+}{\Theta^1_M\big({\mathcal{D}}(z,w)\big)} $$
 for all $z \in {\mathscr{S}}$ with $\phi(z) \le M$,   where $\Theta^1_M$ and $\Theta^2_M$ are the functions from  \eqref{eq: goedesi convexi}.  The local slope is a strong upper gradient for ${\phi}$. The same \BBB representation \EEE holds for $\phi_k$ in place of $\phi$. 
\end{lemma}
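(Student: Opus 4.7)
The plan is to prove the representation formula first and then deduce the strong upper gradient property, following the strategy of \cite[Theorem 2.4.9]{AGS} for $\lambda$-geodesically convex functionals, adapted to the more flexible $(\Theta^1_M, \Theta^2_M)$-setting of \eqref{eq: goedesi convexi}.

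For the upper bound on $|\partial\phi|_{\mathcal{D}}(z)$, I would take an arbitrary sequence $(w_n)_n \subset \mathscr{S}$ with $\mathcal{D}(z,w_n) \to 0^+$ and use the elementary inequality $a^+ \le (a-b)^+ + b$ (valid for $b \ge 0$) to split
\begin{align*}
\frac{(\phi(z) - \phi(w_n))^+}{\mathcal{D}(z,w_n)} & \le \frac{\Theta^1_M(\mathcal{D}(z,w_n))}{\mathcal{D}(z,w_n)} \cdot \frac{(\phi(z) - \phi(w_n) - \Theta^2_M(\mathcal{D}(z,w_n)))^+}{\Theta^1_M(\mathcal{D}(z,w_n))} \\
& \quad + \frac{\Theta^2_M(\mathcal{D}(z,w_n))}{\mathcal{D}(z,w_n)}.
\end{align*}
The asymptotics $\Theta^1_M(t)/t \to 1$ and $\Theta^2_M(t)/t \to 0$ as $t \to 0^+$ then bound the $\limsup$ by the claimed supremum. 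For the opposite inequality, fix $w \in \mathscr{S} \setminus \lbrace z \rbrace$; since $\phi(z) \le M$, assumption \eqref{eq: goedesi convexi} yields a curve $(\gamma_s)_{s \in [0,1]}$ with $\gamma_0 = z$, $\gamma_1 = w$, $\mathcal{D}(z,\gamma_s) \le s\Theta^1_M(\mathcal{D}(z,w))$, and $\phi(z) - \phi(\gamma_s) \ge s(\phi(z) - \phi(w) - \Theta^2_M(\mathcal{D}(z,w)))$. In particular $\gamma_s \to z$ in $\mathcal{D}$ as $s \to 0^+$, and dividing these two estimates yields
\begin{align*}
\frac{(\phi(z) - \phi(\gamma_s))^+}{\mathcal{D}(z,\gamma_s)} \ge \frac{(\phi(z) - \phi(w) - \Theta^2_M(\mathcal{D}(z,w)))^+}{\Theta^1_M(\mathcal{D}(z,w))}
\end{align*}
for every $s \in (0,1]$ with $\gamma_s \ne z$; passing to $\limsup_{s \to 0^+}$ and then to the supremum over $w$ completes the representation.

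The strong upper gradient property then follows from the classical argument of \cite[Theorem 1.2.5]{AGS}: the representation provides the pointwise bound $(\phi(z) - \phi(w))^+ \le |\partial\phi|_\mathcal{D}(z)\,\Theta^1_M(\mathcal{D}(z,w)) + \Theta^2_M(\mathcal{D}(z,w))$ on sublevel sets of $\phi$, which, applied along partitions of an absolutely continuous curve $v:(a,b) \to \mathscr{S}$ and combined with the asymptotics of $\Theta^i_M$ near zero, produces the integral bound in Definition \ref{main def2}(i). The statement for $\phi_k$ is identical, since by \eqref{eq: goedesi convexi} the curve $(\gamma_s)$ remains in $\mathscr{S}_k$ whenever $z,w \in \mathscr{S}_k$; the supremum in the representation is thus effectively restricted to $\mathscr{S}_k$, consistent with $\phi_k \equiv +\infty$ outside $\mathscr{S}_k$. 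I expect the main obstacle to lie in this final step, since the $\Theta^2_M$ error terms arising in the telescoping must be shown to vanish in the integral limit; the key input is the sublinearity $\Theta^2_M(t) = o(t)$ as $t \to 0$, which ensures that only the expected $|\partial\phi|_\mathcal{D}(v)\cdot|v'|_\mathcal{D}$ contribution survives.
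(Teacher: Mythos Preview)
Your proposal is correct and follows essentially the same route as the paper's own proof: the paper also obtains the $\le$ direction by rewriting the $\limsup$ in the definition of the slope via the asymptotics $\Theta^1_M(t)/t\to 1$, $\Theta^2_M(t)/t\to 0$, proves the $\ge$ direction by evaluating along the curve $(\gamma_s)$ from \eqref{eq: goedesi convexi}, and then refers to \cite[Corollary~2.4.10]{AGS} and \cite[Lemma~4.9]{Friedrich-Kruzik-19} for the strong upper gradient property, noting (as you do) that the curve stays in $\mathscr{S}_k$ for the $\phi_k$ case. Your treatment is slightly more explicit (the elementary inequality $a^+\le (a-b)^++b$ in the first step, and the telescoping sketch for the upper gradient property), but the structure and ingredients are the same.
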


\begin{proof} 
We prove the result only for $\phi$. The argument for $\phi_k$  is exactly the same which we will explain briefly at the end of the proof. We follow  the lines of the proofs of Theorem 2.4.9 and Corollary 2.4.10 in \cite{AGS},  see also \cite[Lemma 4.9]{Friedrich-Kruzik-19}. \EEE Let  $M>0$ and $z \in \mathscr{S}$ with $\phi(z) \le M$. Recall that $\lim_{t \to 0}\Theta^1_M(t)/t =1$ and $\lim_{t \to 0}\Theta^2_M(t)/t =0$.  We also recall the definition of the local slope in Definition \ref{main def2} and obtain 
\begin{align*}
|\partial {\phi}|_{{\mathcal{D}}}(z) & = \limsup_{w  \to z} \frac{({\phi}(z) - {\phi}(w))^+}{{\mathcal{D}}(z,w)}      
= \limsup_{w \to z} \frac{\big({\phi}(z) - {\phi}(w) - \Theta^2_M\big({\mathcal{D}}(z,w)\big)\big)^+}{\Theta_M^1\big({\mathcal{D}}(z,w)\big)}   \\
&\le \sup_{w  \neq z, w \in \mathscr{S}} \  \frac{\big({\phi}(z) - {\phi}(w) - \Theta^2_M\big({\mathcal{D}}(z,w)\big)\big)^+}{\Theta_M^1\big({\mathcal{D}}(z,w)\big)}.
\end{align*}
In the second equality we used  that $w \to z$ means ${\mathcal{D}}(z,w) \to 0$, and the fact that   $\lim_{t \to 0}\Theta_M^1(t)/t =1$ and $\lim_{t \to 0}\Theta^2_M(t)/t =0$.

 To see the other inequality,  we fix $w \neq z$. It   is not restrictive to suppose that 
\begin{align*}
\phi(z) - \phi(w) - \Theta^2_M\big({\mathcal{D}}(z,w)\big)>0.
\end{align*}
Let $(\gamma_s)_{s \in [0,1]}$ be the curve given in \eqref{eq: goedesi convexi} with $\gamma_0 = z$ and $\gamma_1 = w$. By \eqref{eq: goedesi convexi} we obtain 
 \begin{align*}
\frac{\phi(z) - \phi(\gamma_s)}{\mathcal{D}(z,\gamma_s )}  &\ge \frac{ s{\phi}(z) - s{\phi}(w) - s\Theta^2_M\big({\mathcal{D}}(z,w)\big) }{s\Theta_M^1\big({\mathcal{D}}(z,w)\big)}. 
\end{align*}
Since  $\mathcal{D}(\gamma_s, z)  \to 0$ as $s \to 0$, \BBB see \eqref{eq: goedesi convexi}(i), \EEE we conclude 
$$|\partial {\phi}|_{{\mathcal{D}}}(z)   \ge \frac{ {\phi}(z) - {\phi}(w) -\Theta^2_M\big({\mathcal{D}}(z,w)\big) }{\Theta^1_M\big({\mathcal{D}}(z,w)\big)}. $$
The claim now follows by taking the supremum with respect to $w \in \mathscr{S}$.

With this representation of the local slope at hand, one can also show that   $|\partial {\phi}|_{{\mathcal{D}}}$ is a strong upper gradient. We refer the reader to   \cite[Corollary 2.4.10]{AGS} and \cite[Lemma 4.9]{Friedrich-Kruzik-19} for details. 

The same argument works for $\phi_k$ in place of $\phi$. The only important point to notice is that the curve $(\gamma_s)_{s \in [0,1]}$ lies in $\mathscr{S}_k$ if $z,w \in \mathscr{S}_k$, see the line below  \eqref{eq: goedesi convexi}, i.e., $\phi_k(\gamma_s) = \phi(\gamma_s)$ for $s \in [0,1]$. 
\end{proof}

We  are now ready for the proof of Theorem~\ref{th:abstract convergence-numerics}.

\begin{proof}[Proof of Theorem \ref{th:abstract convergence-numerics}]

  Consider a  null sequence $(\tau_k)_k$ sequence and $\bar{z}_0 \in \mathscr{S}$. By \eqref{eq: strong convergence} and the fact that $\phi$ is continuous with respect to $\rho$ \cm (to recall its definition, see the paragraph preceding \eqref{eq: strong convergence}) \EEE we find a sequence  $(Y^0_{k,\tau_k})_k$ satisfying $Y^0_{k,\tau_k} \in \mathscr{S}_k$,    $Y^0_{k,\tau_k} \stackrel{\BBB \rho \EEE}{\to}  \bar{z}_0$, and $\phi(Y^0_{k,\tau_k}) \to \phi(\bar{z}_0)$. This yields   \eqref{eq: abstract assumptions1-new} and also  \eqref{eq: abstract assumptions1}(ii). Since also $\mathcal{D}$ is continuous with respect to $\rho$, \eqref{eq: abstract assumptions1}(i) holds as well.

Recall the definition $\phi_k:\mathscr{S} \to [0,+\infty]$ by $\phi_k(z) = \phi(z)$ if $z \in \mathscr{S}_k$  
 and $\phi_k(z) = +\infty$ else. We define time-discrete solutions $(\tilde{Y}_{k,\tau_k})_k$ in the sense of \eqref{eq: ds-new1}-\eqref{eq: ds-new2} with respect to $\phi_k$ starting from $(Y^0_{k,\tau_k})_k$.  Their existence follows from the direct method of the calculus of variations, by using  \eqref{basic assumptions2-new}, \eqref{compatibility-new}, and the fact that $\mathscr{S}_k$ is closed with respect to $\sigma$. Clearly, these correspond to time-discrete solutions for $\phi$ in $\mathscr{S}_k$. 

It remains to check that the time-discrete solutions converge to a limiting curve which is a curve of maximal slope for $\phi$ with respect to $|\partial \phi|_{\mathcal{D}}$. Our goal is to apply Theorem \ref{th:abstract convergence 2}. Since \eqref{eq: abstract assumptions1} has already been verified and $|\partial {\phi}|_{{\mathcal{D}}}$ is a strong upper gradient by Lemma \ref{lemma: slopes}, it remains to confirm \eqref{compatibility}-\eqref{eq: implication}.  Set  $\mathcal{D}_k = \mathcal{D}$  for all $k \in \N$. First, \eqref{compatibility} and \eqref{basic assumptions2} follow from the fact that $\phi \le \phi_k$, \eqref{basic assumptions2-new}, and \eqref{compatibility-new}(i). In a similar fashion, \eqref{eq: implication-1}  follows from \eqref{compatibility-new}(ii). We now show \eqref{eq: implication-2}.
 
Consider a sequence $z_k \in \mathscr{S}_k$ with $\sup_k\phi_k(z_k) \le M < +\infty$ and   $z_k\stackrel{\sigma}{\to} z$. By \eqref{eq: implication-1} we find also $\phi(z) \le M$. Let $\eps > 0 $. By applying Lemma \ref{lemma: slopes} we choose $w \in \mathscr{S}$  such that
\begin{align}\label{eq: starting point} 
|\partial {\phi}|_{{\mathcal{D}}}(z) \le   \frac{\big({\phi}(z) - {\phi}(w) - \Theta^2_M\big({\mathcal{D}}(z,w) \big) \big)^+}{\Theta^1_M\big({\mathcal{D}}(z,w)\big)} +\eps.
\end{align}
Let $(w_k)_k \subset \mathscr{S}$ be a mutual recovery sequence as given by \eqref{eq: mutual recovery}. By \eqref{eq: strong convergence} and the fact that $\mathcal{D}$ and $\phi$ are continuous with respect to the topology $\rho$, we can suppose that $w_k \in \mathscr{S}_k$  and  convergence \eqref{eq: mutual recovery} still holds.  By \eqref{eq: starting point}, $\phi(z_k)= \phi_k(z_k)$, $\phi(w_k)= \phi_k(w_k)$,  and the fact that $\Theta^i_M$ is continuous, increasing for $i=1,2$,  we then  obtain 
\begin{align*}
 |\partial {\phi}|_{{\mathcal{D}}}(z) -\eps \le \BBB \liminf_{k\to \infty}\EEE\frac{\big({\phi}_k(z_k) - {\phi}_k(w_k) - \Theta^2_M\big({\mathcal{D}}(z_k,w_k) \big) \big)^+}{\Theta^1_M\big({\mathcal{D}}(z_k,w_k)\big)}.
\end{align*}
By Lemma \ref{lemma: slopes} (for $\phi_k$)  we then get
\begin{align*}
 |\partial {\phi}|_{{\mathcal{D}}}(z) -\eps &\le \liminf_{k\to \infty} \sup_{w \neq z_k, w \in \mathscr{S}}\frac{\big({\phi}_k(z_k) - {\phi}_k(w) - \Theta^2_M\big({\mathcal{D}}(z_k,w) \big) \big)^+}{\Theta^1_M\big({\mathcal{D}}(z_k,w)\big)} \\ 
 & \BBB = \EEE \liminf_{k \to \infty}   |\partial {\phi}_k|_{{\mathcal{D}}}(z_k).
\end{align*}
As $\eps$ was arbitrary, we get    \eqref{eq: implication-2}.

The statement  now follows from the abstract convergence result formulated in  Theorem \ref{th:abstract convergence 2}. 
\end{proof}

\section{Finite element approximation of weak solutions to von K\'{a}rm\'{a}n viscoelastic plates}\label{sec:FEM}

In this section we apply  Theorem \ref{th:abstract convergence-numerics} to our example of von K\'{a}rm\'{a}n viscoelastic plates. Let $\mathscr{S} = W^{1,2}_0(S;\R^2) \times W^{2,2}_0(S)$,  see \eqref{eq: BClinear}. We denote the strong convergence in  $W^{1,2}(S;\R^2) \times W^{2,2}(S)$ by $\stackrel{\rho}{\to}$. Moreover, we introduce a weak  topology $\sigma$ on $\mathscr{S}$: we say that $(u_k,v_k)   \stackrel{\sigma}{\to} (u,v)$ if $u_k \rightharpoonup u$ weakly in $W^{1,2}(S;\R^2)$ and $v_k \rightharpoonup  v$ weakly in $W^{2,2}(S)$.  We let $\mathscr{S}_k \subset \mathscr{S}$ be finite dimensional subspaces of finite elements such that $\mathscr{S}_k$ is closed with respect to $\sigma$ and $\bigcup_k\mathscr{S}_k$ is $\rho$-dense in $\mathscr{S}$ in the sense of \eqref{eq: strong convergence}. \BBB For an example of such spaces we refer to Section \ref{numexp} below. \EEE

 We let $\phi$ and $\mathcal{D}$ as defined in \eqref{eq: phi0} and \eqref{eq: D,D0-2}, respectively. For simplicity, we set $f \equiv 0$ since the adaptions for the general case are minor and standard.

We recall that  $\tilde{Y}_{k,\tau}$ is called  a time-discrete solution in $\mathscr{S}_k$ if $\tilde{Y}_{k,\tau}(0) \in \mathscr{S}_k$ and  the minimization problem in \eqref{eq: ds-new1} is restricted to $\mathscr{S}_k$. Our main result is the following.

\begin{theorem}[Finite element approximation of weak solutions]\label{maintheorem}
 Consider a  null sequence $(\tau_k)_k$ and let $(u_0,v_0) \in {\mathscr{S}}$. Then  there exist initial values  $(U^0_{k,\tau_k})_k$,  $(V^0_{k,\tau_k})_k$ satisfying $(U^0_{k,\tau_k},U^0_{k,\tau_k})_k \in \mathscr{S}_k$ and 
  \begin{align*}
(U^0_{k,\tau_k},V^0_{k,\tau_k}) \stackrel{\rho}{\to} (u_0,v_0) , \ \ \ \ \  \phi(U^0_{k,\tau_k},U^0_{k,\tau_k}) \to \phi(u_0,v_0),
\end{align*}
  sequences of time-discrete solutions $(\tilde{U}_{k,\tau_k}, \tilde{V}_{k,\tau_k})_k$ in $\mathscr{S}_k$   starting from the initial values $(U^0_{k,\tau_k},U^0_{k,\tau_k})_k$, and a  weak solution $(u,v) :[0,\infty) \to {\mathscr{S}}$  to the partial differential equations \eqref{eq: equation-simp}   in the sense of \eqref{eq: weak equation}  such that 
  up to a  subsequence (not relabeled) 
$$\big(\tilde{U}_{k,\tau_k}(t), \tilde{V}_{k,\tau_k}(t) \big) \stackrel{\rho}{\to} (u(t),v(t)), \ \ \ \ \ \phi\big(\tilde{U}_{k,\tau_k}(t), \tilde{V}_{k,\tau_k}(t) \big) \to \phi(u(t),v(t)) \ \ \  \ \ \ \ \ \forall t \ge 0$$
as $k \to \infty$.  

\end{theorem}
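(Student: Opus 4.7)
The plan is to apply the abstract Theorem~\ref{th:abstract convergence-numerics} to the metric space $(\mathscr{S}, \mathcal{D})$ with energy $\phi$ from \eqref{eq: phi0}, taking $\sigma$ as the weak topology on $W^{1,2}_0(S;\R^2)\times W^{2,2}_0(S)$ and $\rho$ as the strong one. Thus I must verify the hypotheses \eqref{basic assumptions2-new}--\eqref{eq: goedesi convexi} and afterwards identify the resulting curve of maximal slope with a weak solution of \eqref{eq: equation-simp}. Compactness \eqref{basic assumptions2-new} follows from coercivity of $\phi$: positive definiteness of $Q_W^2$ on $\R^{2\times 2}_{\mathrm{sym}}$ combined with Poincar\'{e}'s inequality controls $\|v\|_{W^{2,2}}$ via the bending term, whereupon Korn's inequality together with $\|\nabla v\otimes\nabla v\|_{L^2}\le C\|v\|^2_{W^{2,2}}$ controls $\|u\|_{W^{1,2}}$. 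Lower semicontinuity \eqref{compatibility-new} and $\rho$-continuity of $\phi,\mathcal{D}$ are standard, using the compact embedding $W^{2,2}(S)\hookrightarrow W^{1,p}(S)$ for $p<\infty$ to pass to the limit in the quadratic terms in $\nabla v$; the density \eqref{eq: strong convergence} is the standing assumption on the finite element spaces.

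For the mutual recovery sequence \eqref{eq: mutual recovery}, given $z_k=(u_k,v_k)\stackrel{\sigma}{\to}z=(u,v)$ and $w=(\hat{u},\hat{v})\in\mathscr{S}$, the linear structure of $\mathscr{S}$ suggests simply setting $w_k:=z_k+(w-z)$, so that $w_k-z_k\equiv w-z$ is independent of $k$. Expanding $\mathcal{D}(z_k,w_k)^2$ and $\phi(z_k)-\phi(w_k)$, the only $k$-dependent contributions are cross terms of the type $\nabla v_k\odot\nabla(\hat{v}-v)$ and $\nabla v_k\otimes\nabla v_k$, which converge strongly in $L^2$ thanks to $\nabla v_k\to\nabla v$ strongly in $L^4$. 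Consequently $\mathcal{D}(z_k,w_k)\to\mathcal{D}(z,w)$ and $\phi(z_k)-\phi(w_k)\to\phi(z)-\phi(w)$, which is actually stronger than what \eqref{eq: mutual recovery} demands.

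The main technical point is the geodesic convexity condition \eqref{eq: goedesi convexi} (the content of the referenced Lemma~\ref{th: convexity2}), and this is where the hard part of the argument lies. The natural choice is the affine interpolation $\gamma_s:=(1-s)z_0+sz_1$, which lies in $\mathscr{S}_k$ whenever $z_0,z_1$ do. With $A_i:=e(u_i)+\tfrac{1}{2}\nabla v_i\otimes\nabla v_i$, a direct expansion gives the identity
\[
e(\gamma_s^u)+\tfrac{1}{2}\nabla\gamma_s^v\otimes\nabla\gamma_s^v = (1-s)A_0+sA_1 - \tfrac{s(1-s)}{2}(\nabla v_1-\nabla v_0)\otimes(\nabla v_1-\nabla v_0),
\]
and an analogous identity holds for the argument of $Q_D^2$ appearing in $\mathcal{D}(z_0,\gamma_s)$. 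Combining convexity of $Q_W^2$ and $Q_D^2$ with the chain of bounds
\[
\|(\nabla v_1-\nabla v_0)\otimes(\nabla v_1-\nabla v_0)\|_{L^2}\le \|\nabla v_1-\nabla v_0\|_{L^4}^2 \le C\|v_1-v_0\|^2_{W^{2,2}}\le C'\mathcal{D}(z_0,z_1)^2,
\]
obtained via $W^{2,2}\hookrightarrow W^{1,4}$ in 2D and coercivity of the $\nabla^2 v$-part of $Q_D^2$, the defects of $\mathcal{D}(z_0,\gamma_s)$ from $s\,\mathcal{D}(z_0,z_1)$ and of $\phi(\gamma_s)$ from $(1-s)\phi(z_0)+s\phi(z_1)$ are both of order $s(1-s)\mathcal{D}(z_0,z_1)^2$. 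This yields \eqref{eq: goedesi convexi} with $\Theta_M^1(t)=t+C_M t^2$ and $\Theta_M^2(t)=C_M t^2$; the $M$-dependence enters through the bound $\|\nabla v_0\|_{L^4}\le C_M$, available whenever $\phi(z_0)\le M$, which is needed to control the cross terms $\nabla v_0\odot(\nabla v_1-\nabla v_0)$ arising in the dissipation.

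Theorem~\ref{th:abstract convergence-numerics} then produces initial approximations $(U_{k,\tau_k}^0,V_{k,\tau_k}^0)\in\mathscr{S}_k$ (the initial $\rho$-convergence comes directly from density \eqref{eq: strong convergence}), the time-discrete solutions, and a limiting curve of maximal slope $(u,v)$. The identification of such $(u,v)$ with a weak solution of \eqref{eq: equation-simp} in the sense of \eqref{eq: weak equation} is provided by \cite[Thm.~2.2--2.3]{Friedrich-Kruzik-19}. Finally, the upgrade from $\sigma$- to $\rho$-convergence at each time $t$ proceeds by the usual Hilbert-space argument: since $\int Q_W^2(A_k(t))$ and $\int Q_W^2(\nabla^2 v_k(t))$ are both weakly lower semicontinuous and their sum converges to $\phi(u(t),v(t))$, each converges separately; positive definiteness of $Q_W^2$ on symmetric matrices then forces $\nabla^2 v_k(t)\to\nabla^2 v(t)$ strongly in $L^2$, whence $\nabla v_k(t)\otimes\nabla v_k(t)\to\nabla v(t)\otimes\nabla v(t)$ strongly in $L^2$ and consequently $e(u_k(t))\to e(u(t))$ strongly in $L^2$, and Korn's inequality concludes.
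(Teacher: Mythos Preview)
Your proof is correct and follows essentially the same route as the paper: verify the abstract hypotheses \eqref{basic assumptions2-new}--\eqref{eq: goedesi convexi} for Theorem~\ref{th:abstract convergence-numerics} (with the mutual recovery sequence $w_k=z_k+(w-z)$ exactly as in the paper), invoke Theorem~\ref{maintheorem2} to identify the limit as a weak solution, and then upgrade $\sigma$- to $\rho$-convergence via energy convergence plus Korn's inequality. The only cosmetic differences are that you sketch the proof of Lemma~\ref{th: convexity2} (which the paper imports from \cite{Friedrich-Kruzik-19}) and that your strong-convergence upgrade separates membrane and bending terms, whereas the paper works with the combined strain $G(u,v)$ on $\Omega=S\times I$ from Lemma~\ref{lemma: represi}; both arguments are valid.
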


Note that this theorem provides us with the strong convergence of time-discrete finite-element \BBB approximations \EEE to a solution to the original problem. 

The result relies on our abstract approximation result \BBB stated in \EEE Theorem \ref{th:abstract convergence-numerics}. In order to apply  Theorem \ref{th:abstract convergence-numerics}, we need to check the assumptions \eqref{basic assumptions2-new}-\eqref{eq: goedesi convexi}. To this end, we recall some of the results obtained in \cite{Friedrich-Kruzik-19}.

\begin{lemma}[Properties of $({\mathscr{S}},  {\mathcal{D}})$ and ${\phi}$]\label{th: metric space-lin}
We have:

\begin{itemize}
\item[(i)] $({\mathscr{S}},  {\mathcal{D}})$ is a complete metric space.
\item[(ii)] Compactness: If $(u_k,v_k)_k \subset {\mathscr{S}}$ is a sequence with $\sup_k {\phi}(u_k,v_k)<+\infty$, then $(u_k,v_k)_k$ is bounded in $W^{1,2}(S;\R^2) \times W^{2,2}(S)$. 
\item[(iii)] Topologies: The topology induced by ${\mathcal{D}}$ is equivalent to the topology $\rho$.    
\item[(iv)] Continuity: ${\mathcal{D}}( (u_k,v_k), (u,v)) \to 0$ \ \   $\Rightarrow$ \ \  $\lim_{k \to \infty} {\phi}(u_k,v_k) =  {\phi}(u,v)$.
\end{itemize}
\end{lemma}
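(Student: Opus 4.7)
The plan is to establish the four items in order, reducing each to Korn's and Poincar\'{e}'s inequalities combined with the positive definiteness of $Q_W^2$ and $Q_D^2$ on $\R^{2\times 2}_{\rm sym}$ (asserted after \eqref{eq: order4}) and to the Sobolev embedding $W^{2,2}(S) \hookrightarrow W^{1,q}(S)$ for every $q<\infty$. The unifying idea is that the $v$-component can always be treated first, since it appears in the highest-order quadratic form; the nonlinear coupling $\tfrac{1}{2}\nabla v\otimes\nabla v$ is then upgraded from merely bounded to convergent via Sobolev embedding, after which the $u$-component is recovered by Korn's inequality with zero boundary values.

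For (ii), if $\phi(u_k,v_k)\le C$, positive definiteness forces $\nabla^2 v_k$ bounded in $L^2(S)$, so the zero boundary conditions and Poincar\'{e}'s inequality give a uniform $W^{2,2}$-bound on $v_k$, and hence a uniform $L^r$-bound on $\nabla v_k\otimes \nabla v_k$ for every $r<\infty$. A triangle-inequality argument on the first term of $\phi$ then bounds $e(u_k)$ in $L^2(S;\R^{2\times 2})$, and Korn's inequality yields a uniform $W^{1,2}$-bound on $u_k$; the linear term $\int_S f v$ is harmless since $f\in L^\infty$.

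For (iii), the forward direction is immediate because the integrands defining $\mathcal{D}^2$ are polynomial expressions in quantities that are continuous under $\rho$-convergence. For the converse, $\mathcal{D}((u_k,v_k),(u,v))\to 0$ together with positive definiteness of $Q_D^2$ forces $\nabla^2 v_k\to \nabla^2 v$ in $L^2$; with the zero boundary values this upgrades to $v_k\to v$ in $W^{2,2}$, whence $\nabla v_k\otimes\nabla v_k\to \nabla v\otimes\nabla v$ strongly in $L^2$ by Sobolev embedding. Subtracting this converging term from the first contribution to $\mathcal{D}^2$ shows $e(u_k)\to e(u)$ in $L^2$, and Korn's inequality closes the argument. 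Part (i) is then obtained by running the same scheme on a $\mathcal{D}$-Cauchy sequence: first extract a $W^{2,2}$-Cauchy sequence for $v_k$, then use its limit to extract a $W^{1,2}$-Cauchy sequence for $u_k$. Finally, (iv) is immediate from (iii) combined with the continuity of the polynomial integrands of $\phi$ under strong $\rho$-convergence, again using the Sobolev embedding to pass to the limit in $\nabla v\otimes\nabla v$.

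The main technical obstacle is the presence of the nonlinear term $\tfrac{1}{2}\nabla v\otimes\nabla v$ inside a quadratic form, which prevents a purely Hilbertian treatment of $(\mathscr{S},\mathcal{D})$. The resolution is precisely the hierarchical argument above: because $\nabla^2 v$ is controlled first at the level of the quadratic form, the nonlinear expression is controlled or convergent exactly when we need it, so it never interferes with the subsequent recovery of $u$ by Korn's inequality.
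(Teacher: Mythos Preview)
The paper does not prove this lemma here; its proof consists solely of the citation ``See \cite[Lemma 4.6]{Friedrich-Kruzik-19}.'' Your argument is correct and follows the natural route---indeed, the same hierarchical ``$v$ first, then $u$'' mechanism (control $\nabla^2 v$ via positive definiteness of the quadratic form on $\R^{2\times 2}_{\rm sym}$, upgrade the nonlinear term $\nabla v\otimes\nabla v$ via the Sobolev embedding $W^{2,2}\hookrightarrow W^{1,4}$ in two dimensions, then recover $e(u)$ and apply Korn with zero boundary data) is exactly what the present paper invokes at the end of the proof of Theorem~\ref{maintheorem}. One minor remark: in part~(ii) you should make explicit that the linear forcing term $\int_S fv$ is absorbed via Young's inequality against the coercive term $\tfrac{1}{24}\int_S Q_W^2(\nabla^2 v)$ combined with Poincar\'{e}, since otherwise ``$\phi$ bounded'' does not immediately give the second-order term bounded; but this is routine and you clearly have it in mind.
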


\begin{proof}
See \cite[Lemma 4.6]{Friedrich-Kruzik-19}.
\end{proof}

\begin{theorem}[Curves of maximal slope and weak solutions]\label{maintheorem2}
For all $(u_0,v_0) \in {\mathscr{S}}$,  each  curve of maximal slope $(u,v) :[0,\infty) \to {\mathscr{S}}$  for ${\phi}$   with respect to  $|\partial {\phi}|_{{\mathcal{D}}}$ with $(u,v)(0)=(u_0,v_0)$  is a weak solution  to the partial differential equations \eqref{eq: equation-simp} in the sense of \eqref{eq: weak equation}.

\end{theorem}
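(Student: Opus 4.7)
The plan is to convert the inequality defining a curve of maximal slope into an energy-dissipation equality, compute the metric derivative $|u'|_{\mathcal{D}}$ and the local slope $|\partial \phi|_{\mathcal{D}}$ in explicit functional form, and then use a Cauchy--Schwarz rigidity argument to extract the weak Euler--Lagrange equations \eqref{eq: weak equation1}--\eqref{eq: weak equation2}. Concretely, combining the defining inequality of Definition \ref{main def2}(iii) with the strong upper gradient property of $|\partial\phi|_{\mathcal{D}}$ from Lemma \ref{lemma: slopes}, and applying $ab\le \tfrac{1}{2}a^2+\tfrac{1}{2}b^2$, forces both to become equalities, producing the energy-dissipation identity
\[ -\tfrac{d}{dt}\phi(u(t),v(t)) = \tfrac{1}{2}|u'|_{\mathcal{D}}^2(t) + \tfrac{1}{2}|\partial \phi|_{\mathcal{D}}^2(u(t),v(t)) = |u'|_{\mathcal{D}}(t)\,|\partial \phi|_{\mathcal{D}}(u(t),v(t)) \]
for a.e.\ $t$, together with $|u'|_{\mathcal{D}}(t) = |\partial \phi|_{\mathcal{D}}(u(t),v(t))$.

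Next I would make both quantities explicit. Using $\tfrac{1}{2}\nabla v(s)\otimes \nabla v(s) - \tfrac{1}{2}\nabla v(t)\otimes \nabla v(t) = (s-t)\,\nabla \partial_t v \odot \nabla v + o(s-t)$ and Lemma \ref{th: metric space-lin}(iii) (equivalence of the $\mathcal{D}$-topology with $\rho$), a direct expansion of $\mathcal{D}^2$ along the curve yields
\[ |u'|_{\mathcal{D}}^2(t) = \int_S Q_D^2\bigl(e(\partial_t u) + \nabla \partial_t v \odot \nabla v\bigr) + \tfrac{1}{12}\int_S Q_D^2(\nabla^2 \partial_t v). \]
Performing the analogous expansion at $(u,v)$ in the direction of arbitrary $(\varphi_u,\varphi_v) \in \mathscr{S}$ and plugging into the supremum representation from Lemma \ref{lemma: slopes} yields
\[ |\partial \phi|_{\mathcal{D}}(u,v) = \sup_{(\varphi_u,\varphi_v)\neq 0} \frac{\bigl(-\langle D\phi(u,v),(\varphi_u,\varphi_v)\rangle\bigr)^+}{\sqrt{\mathcal{Q}_{(u,v)}(\varphi_u,\varphi_v)}}, \]
where $\langle D\phi(u,v),\cdot\rangle$ is the Fr\'echet differential of $\phi$ (which produces exactly the energy contributions appearing in \eqref{eq: weak equation}), and $\mathcal{Q}_{(u,v)}(\varphi_u,\varphi_v) := \int_S Q_D^2(e(\varphi_u)+\nabla \varphi_v \odot \nabla v) + \tfrac{1}{12}\int_S Q_D^2(\nabla^2 \varphi_v)$ is the quadratic linearization of $\mathcal{D}^2$ at $(u,v)$ in that direction.

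Since $\phi$ is smooth, the chain rule gives $\tfrac{d}{dt}\phi(u(t),v(t)) = \langle D\phi(u,v),(\partial_t u,\partial_t v)\rangle$; inserting this together with the two explicit formulas into the energy-dissipation identity shows that $(\partial_t u,\partial_t v)$ realizes equality in the Cauchy--Schwarz bound hidden in the supremum defining $|\partial \phi|_{\mathcal{D}}$. Because $\mathcal{Q}_{(u,v)}$ is a positive definite continuous quadratic form on $\mathscr{S}$ (positive definiteness of $\C^2_D$ on $\R^{2\times 2}_{\rm sym}$ noted after \eqref{eq: order4}, combined with the clamped boundary conditions in \eqref{eq: BClinear}), a Riesz/Cauchy--Schwarz argument then forces
\[ \mathcal{Q}_{(u,v)}\bigl((\partial_t u,\partial_t v),(\varphi_u,\varphi_v)\bigr) = -\langle D\phi(u,v),(\varphi_u,\varphi_v)\rangle \qquad \forall\,(\varphi_u,\varphi_v)\in \mathscr{S}, \]
where $\mathcal{Q}_{(u,v)}(\cdot,\cdot)$ is the polarized bilinear form. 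Testing this identity with $\varphi_v=0$ and with $\varphi_u=0$, respectively, recovers exactly \eqref{eq: weak equation1} and \eqref{eq: weak equation2}.

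The main obstacle will be the simultaneous infinitesimal linearization of $\mathcal{D}$ and $\phi$ at a general point $(u,v)$: because the nonlinear membrane term $\tfrac{1}{2}\nabla v \otimes \nabla v$ enters both, $\mathcal{D}$ is not translation invariant on $\mathscr{S}$ and is not a Hilbert norm. Identifying $|\partial \phi|_{\mathcal{D}}$ with the supremum formula above therefore requires constructing admissible competitors in the $\limsup$ of Definition \ref{main def2}(ii) by perturbing $(u,v)$ along $\varepsilon(\varphi_u,\varphi_v)$ and carefully tracking the $O(\varepsilon^2)$ cross terms produced by $\nabla v\cdot\nabla \varphi_v$; the continuity statement Lemma \ref{th: metric space-lin}(iv) is the essential tool for making this rigorous, and it is precisely for this step that the assumption $a=0$ realizes the minima in \eqref{eq:Q2}--\eqref{eq:Q22} is convenient.
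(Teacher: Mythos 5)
The paper does not prove this statement in-line but defers entirely to \cite[Theorem 2.2]{Friedrich-Kruzik-19}, and your argument reproduces exactly the strategy carried out there: the energy--dissipation equality obtained by combining the maximal-slope inequality with the strong-upper-gradient bound and Young's inequality, the explicit identification of the metric derivative and of the local slope as a quotient of the Fr\'echet differential of $\phi$ by the state-dependent quadratic form $\mathcal{Q}_{(u,v)}$, and the equality case of Cauchy--Schwarz (via Riesz representation for $\mathcal{Q}_{(u,v)}$) to extract the weak system \eqref{eq: weak equation}. The delicate point you correctly flag---proving the upper bound in the identification of $|\partial\phi|_{\mathcal{D}}$ despite the non-translation-invariance of $\mathcal{D}$ caused by the term $\tfrac12\nabla v\otimes\nabla v$---is exactly where the reference uses the convexity/representation machinery corresponding to Lemmas \ref{lemma: slopes} and \ref{th: convexity2}, so your outline is sound and essentially coincides with the cited proof.
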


\begin{proof}
See \cite[Theorem 2.2]{Friedrich-Kruzik-19}.
\end{proof}

 \begin{lemma}[Convexity and generalized geodesics]\label{th: convexity2}
Let $M >0$. Then there exist smooth increasing functions $\Theta^1,\Theta^2_M:[0,\infty) \to [0,\infty)$ satisfying   $\lim_{t \to 0}\Theta^1(t)/t =1$ and $\lim_{t \to 0}\Theta^2_M(t)/t =0$ such that for all $(u_0,v_0) \in {\mathscr{S}}$ with $\phi(u_0,v_0) \le M$  and all $(u_1,v_1) \in {\mathscr{S}}$ there holds   
\begin{align*}
(i)& \ \    \mathcal{D}\big((u_0,v_0), (u_s,v_s)\big)  \le  s \,  \Theta^1\big(\mathcal{D}\big((u_0,v_0), (u_1,v_1)\big)\big), \\
(ii) & \ \ \phi(u_s,v_s)  \le (1-s) \phi(u_0,v_0) + s\phi(u_1,v_1) +s \, \Theta^2_M\big(\mathcal{D}\big((u_0,v_0), (u_1,v_1)\big)\big),  
\end{align*}
where $u_s := (1-s) u_0 + su_1$ and  $v_s := (1-s) v_0 + sv_1$,  $s \in [0,1]$. 
\end{lemma}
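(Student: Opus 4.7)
The guiding observation is that, because $(u_s,v_s)$ is the convex combination of $(u_0,v_0)$ and $(u_1,v_1)$, a direct expansion gives the key identity
\begin{equation*}
e(u_s)+\tfrac{1}{2}\nabla v_s\otimes\nabla v_s \;=\; (1-s)A_0 + sA_1 \;+\; B,\qquad B:=-\tfrac{s(1-s)}{2}(\nabla v_1-\nabla v_0)\otimes(\nabla v_1-\nabla v_0),
\end{equation*}
where $A_j:=e(u_j)+\frac{1}{2}\nabla v_j\otimes\nabla v_j$. The correction $B$ is quadratic in $\nabla v_1-\nabla v_0$, and the 2D Sobolev embedding $W^{2,2}(S)\hookrightarrow W^{1,4}(S)$ together with Poincaré on $W^{2,2}_0$ yield $\|\nabla v_1-\nabla v_0\|_{L^4}^2\le C\|\nabla^2(v_1-v_0)\|_{L^2}^2\le C\,\mathcal{D}((u_0,v_0),(u_1,v_1))^2$, with $C$ depending only on $S$ and $Q^2_D$. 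This single estimate drives both (i) and (ii).

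\textbf{Estimate (i).} Using the identity above, the membrane integrand of $\mathcal{D}^2$ at $(u_0,v_0),(u_s,v_s)$ is $G_s=sG_1+B$, where $G_1$ is the same quantity at $s=1$. Since $\sqrt{Q_D^2(\cdot)}$ is a seminorm, the triangle inequality and the $L^4$-estimate above give $(\int_S Q^2_D(G_s))^{1/2}\le s(\int_S Q^2_D(G_1))^{1/2}+Cs(1-s)\mathcal{D}^2$. The bending contribution scales exactly: $\nabla^2 v_s-\nabla^2 v_0=s(\nabla^2 v_1-\nabla^2 v_0)$. Combining the two via the elementary inequality $\sqrt{(a+\delta)^2+b^2}\le\sqrt{a^2+b^2}+\delta$ for $a,b,\delta\ge 0$ yields (i) with $\Theta^1(t):=t+Ct^2$, independent of $M$.

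\textbf{Estimate (ii).} The bending part is convex since $Q^2_W$ is a positive semidefinite quadratic form; the parallelogram identity gives
\begin{equation*}
\tfrac{1}{24}Q^2_W(\nabla^2 v_s)-(1-s)\tfrac{1}{24}Q^2_W(\nabla^2 v_0)-s\tfrac{1}{24}Q^2_W(\nabla^2 v_1)=-\tfrac{s(1-s)}{24}Q^2_W(\nabla^2 v_1-\nabla^2 v_0)\le 0.
\end{equation*}
For the membrane part, inserting $A_s=(1-s)A_0+sA_1+B$ and expanding $Q^2_W$ produces, on top of the convex combination, the negative term $-s(1-s)Q^2_W(A_1-A_0)$, a cross term $2\langle (1-s)A_0+sA_1,B\rangle_{Q_W^2}$, and $Q^2_W(B)$. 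Cauchy--Schwarz for $Q_W^2$ together with the upper bound $Q^2_W(X)\le C|X|^2$ yield, pointwise, $|\langle(1-s)A_0+sA_1,B\rangle_{Q^2_W}|\le Cs(1-s)((1-s)|A_0|+s|A_1|)|\nabla v_1-\nabla v_0|^2$ and $Q^2_W(B)\le Cs^2(1-s)^2|\nabla v_1-\nabla v_0|^4$.

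\textbf{Main obstacle and its resolution.} The delicate point is that $\|A_1\|_{L^2}$ appears in the error and cannot be estimated by $\phi(u_0,v_0)\le M$ alone. The key is that the bending term of $\mathcal{D}^2$ controls $\|\nabla^2(v_1-v_0)\|_{L^2}$, hence by Poincaré $\|v_1\|_{W^{2,2}}\le C\sqrt{M}+C\mathcal{D}$, and then by Sobolev $\|\nabla v_1\|_{L^4}\le C(\sqrt{M}+\mathcal{D})$. The membrane term controls $\|G_1\|_{L^2}$, so using the previous line together with Korn on $W^{1,2}_0$ one obtains $\|u_1\|_{W^{1,2}}\le C_M(1+\mathcal{D})^2$, and consequently $\|A_1\|_{L^2}\le C_M(1+\mathcal{D})^2$; the analogous bound $\|A_0\|_{L^2}\le C\sqrt{M}$ is trivial. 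Inserting these and using Hölder with the $L^4$-estimate for $\nabla v_1-\nabla v_0$ gives the total error $\le s\,C_M\,\mathcal{D}^2(1+\mathcal{D})^2=:s\,\Theta^2_M(\mathcal{D})$, which satisfies $\Theta^2_M(t)/t\to 0$ as $t\to 0$. This completes the plan; smoothing $\Theta^1$ and $\Theta^2_M$ by standard mollification keeps the required asymptotics.
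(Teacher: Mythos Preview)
The paper does not prove this lemma; its entire proof is the one-line citation ``See \cite[Lemma 4.8]{Friedrich-Kruzik-19}.'' So there is no argument in the present paper to compare against. Your approach is exactly the natural direct proof one would expect in the cited reference: exploit the algebraic identity for convex combinations to isolate the quadratic correction $B=-\tfrac{s(1-s)}{2}(\nabla v_1-\nabla v_0)\otimes(\nabla v_1-\nabla v_0)$, control it in $L^2$ via the 2D embedding $W^{2,2}_0\hookrightarrow W^{1,4}$ and the bending part of $\mathcal{D}$, and then handle the cross terms by Cauchy--Schwarz and H\"older. The argument is correct and yields the explicit polynomials $\Theta^1(t)=t+Ct^2$ and $\Theta^2_M(t)=C_M t^2(1+t)^2$, which are already smooth and increasing---your closing remark about mollification is therefore unnecessary.

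Two minor simplifications: your detour through Korn's inequality to bound $\|u_1\|_{W^{1,2}}$ is not needed, since $A_1-A_0=G_1$ directly gives $\|A_1\|_{L^2}\le\|A_0\|_{L^2}+\|G_1\|_{L^2}\le C\sqrt{M}+C\mathcal{D}$, a sharper bound than the $C_M(1+\mathcal{D})^2$ you state. Also, all the matrix-valued quantities $A_j$, $G_1$, $B$, and $\nabla^2 v_j$ are symmetric, so the positive definiteness of $Q^2_W$ and $Q^2_D$ on $\R^{2\times 2}_{\rm sym}$ suffices throughout; it is worth saying this explicitly since the paper only asserts positive \emph{semi}-definiteness on the full matrix space.
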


\begin{proof}
See \cite[Lemma 4.8]{Friedrich-Kruzik-19}.
\end{proof}

 \begin{lemma}[Representation of energy and dissipation]\label{lemma: represi}
Let $\Omega = S \times (-\frac{1}{2},\frac{1}{2})$.  For $(u,v) \in \mathscr{S}$ we define for brevity
\begin{align}\label{eq: Gdef}
G(u,v)(x',x_3) =  e(u)(x') + \frac{1}{2} \nabla v(x') \otimes \nabla v(x')  -  x_3\nabla^2 v(x') \  \text{ for } x=(x',x_3) \in \Omega.
\end{align} 
Then $\phi$ and $\mathcal{D}$ can be represented as 
\begin{align}\label{eq: G and phi}
(i) & \ \ {\phi}(u,v) =  \int_\Omega \frac{1}{2}Q^2_W(G(u,v)),\notag \\
(ii) & \ \   {{\mathcal D}}\big( (u_1,v_1), (u_2,v_2)\big) = \Big( \int_\Omega Q^2_D\big(G(u_1,v_1) - G(u_2,v_2)\big) \Big)^{1/2}. 
\end{align}

\end{lemma}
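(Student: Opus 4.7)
The plan is to reduce the three-dimensional integral over $\Omega = S \times I$ to a two-dimensional one over $S$ by integrating out $x_3$ explicitly, exploiting the fact that $G(u,v)$ is affine in $x_3$ and that $Q^2_W, Q^2_D$ are genuine quadratic forms with symmetric bilinear counterparts $\C^2_W, \C^2_D$ via \eqref{eq: order4}.

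For part (i), I would write $G(u,v)(x',x_3) = A(x') - x_3\, \nabla^2 v(x')$ with $A(x') := e(u)(x') + \tfrac{1}{2}\nabla v(x') \otimes \nabla v(x')$. Using bilinearity of $\C^2_W$, I expand
\begin{align*}
Q^2_W(G(u,v)) = Q^2_W(A) - 2 x_3\, \C^2_W[A,\nabla^2 v] + x_3^2\, Q^2_W(\nabla^2 v).
\end{align*}
By Fubini, I integrate in $x_3 \in I$ first: since $\int_I x_3\, dx_3 = 0$ the linear cross term vanishes, and $\int_I x_3^2\, dx_3 = 1/12$ produces the factor of $1/12$ in front of the bending term. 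Multiplying by $1/2$ and integrating over $S$ gives exactly the expression for $\phi$ in \eqref{eq: phi0} (using $f \equiv 0$, as stipulated at the beginning of Section \ref{sec:FEM}).

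Part (ii) is the same computation applied to the difference: $G(u_1,v_1) - G(u_2,v_2)$ is again affine in $x_3$, with $x_3$-independent part $\Delta A := e(u_1) - e(u_2) + \tfrac{1}{2}(\nabla v_1 \otimes \nabla v_1 - \nabla v_2 \otimes \nabla v_2)$ and slope $-(\nabla^2 v_1 - \nabla^2 v_2)$, so after the identical expansion and integration one obtains
\begin{align*}
\int_\Omega Q^2_D\big(G(u_1,v_1)-G(u_2,v_2)\big) = \int_S Q^2_D(\Delta A) + \tfrac{1}{12}\int_S Q^2_D(\nabla^2 v_1 - \nabla^2 v_2),
\end{align*}
and taking the square root matches \eqref{eq: D,D0-2}.

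I do not expect a genuine obstacle: the only substantive ingredients are bilinearity of $Q^2_W, Q^2_D$ from \eqref{eq: order4}, the parity of the map $x_3 \mapsto x_3$ on the symmetric interval $I$, and the moment $\int_I x_3^2\, dx_3 = 1/12$. The one small point worth stating carefully is that the linear-in-$x_3$ cross term vanishes pointwise in $x' \in S$ after integration in $x_3$, which requires only that $A(x')$ and $\nabla^2 v(x')$ do not depend on $x_3$; this is immediate from their definitions.
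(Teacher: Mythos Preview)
Your argument is correct and is exactly the computation one would expect: the affine-in-$x_3$ structure of $G(u,v)$ together with the moments $\int_I x_3\,dx_3=0$ and $\int_I x_3^2\,dx_3=1/12$ immediately yields the identification with \eqref{eq: phi0} and \eqref{eq: D,D0-2}. The paper itself does not give a proof but simply refers to \cite[Remark 5.4]{Friedrich-Kruzik-19}; your direct verification is the natural one and is presumably what that remark contains.
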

\begin{proof}
See \cite[Remark 5.4]{Friedrich-Kruzik-19}
\end{proof}

We are now in a position to prove Theorem \ref{maintheorem}.  

\begin{proof}[Proof of Theorem \ref{maintheorem}]

\BBB First, \EEE note that $\phi$ and $\mathcal{D}$ are continuous with respect to $\rho$, see Lemma \ref{th: metric space-lin}(iii),(iv).  We now check that \eqref{basic assumptions2-new}-\eqref{eq: goedesi convexi} hold. First, \eqref{basic assumptions2-new} follows from the choice of $\sigma$,  Lemma \ref{th: metric space-lin}(ii), and a compactness argument. 
  
  Recall \eqref{eq: Gdef}. Given a sequence $(u_k,v_k)_k$ with $(u_k,v_k) \stackrel{\sigma}{\to} (u,v)$, we observe $G(u_k,v_k)\rightharpoonup G(u,v)$ weakly in $L^2(\Omega;\R^{2\times 2})$ since $W^{2,2} \subset\subset  \BBB W^{1,4} \EEE $ in dimension two. Then property  \eqref{compatibility-new} follows from \eqref{eq: G and phi}  and the fact that  $Q^2_W$ and $Q^2_D$  are positive semi-definite, \BBB see below \eqref{eq: order4}. \EEE By the definition of $\mathscr{S}_k$ we get  \eqref{eq: strong convergence}.  To see \eqref {eq: goedesi convexi}, we use Lemma \ref{th: convexity2} and the fact that, if $(u_0,v_0),(u_1,v_1) \in \mathscr{S}_k$,  the convex combinations also lie in $\mathscr{S}_k$ due to the convexity of the sets $\mathscr{S}_k$.   

It remains to prove \eqref{eq: mutual recovery}. To this end, consider a sequence $(u_k,v_k)$ with $(u_k,v_k) \stackrel{\sigma}{\to} (u,v)$, and recall that  $G(u_k,v_k)\rightharpoonup G(u,v)$ weakly in $L^2(\Omega;\R^{2\times 2})$.    Suppose that also $(\bar{u},\bar{v}) \in \mathscr{S}$ is given. We define $\bar{u}_k = u_k  + \bar{u}- u$ and   $\bar{v}_k = v_k + \bar{v} - v$. Then by \eqref{eq: Gdef} and an elementary expansion we get
\begin{align*}
&G(u_k,v_k) -  G(\bar{u}_k,\bar{v}_k) \\
& =  e(u - \bar{u}) - x_3\nabla^2 (v- \bar{v}) -  \frac{1}{2} \nabla (v-\bar{v}) \otimes \nabla (v-\bar{v})  + \text{sym}(\nabla v_k \otimes \nabla (v- \bar{v})). 
\end{align*}
Since $v_k \to v$ strongly in $W^{1,4}(S)$, we get that $G(u_k,v_k) -  G(\bar{u}_k,\bar{v}_k)$ converges strongly in $L^2(\Omega;\R^{2 \times 2})$ to
\begin{align*}
e(u - \bar{u}) - x_3\nabla^2 (v- \bar{v}) -  \frac{1}{2} \nabla (v-\bar{v}) \otimes \nabla (v-\bar{v})  + \text{sym}(\nabla v \otimes \nabla (v- \bar{v})), 
\end{align*}
i.e.,   $G(u_k,v_k) -  G(\bar{u}_k,\bar{v}_k)$ converges strongly in $L^2(\Omega;\R^{2 \times 2})$ to $G(u,v) -  G(\bar{u},\bar{v})$. In view of \eqref{eq: G and phi}(ii), this implies 
\begin{align}\label{eq: Dconv}
{{\mathcal D}}\big( (u_k,v_k), (\bar{u}_k,\bar{v}_k)\big) \to  {{\mathcal D}}\big( (u,v), (\bar{u},\bar{v})\big).
\end{align}
Moreover, by an elementary expansion and    \eqref{eq: G and phi}(i) we get   
\begin{align*}
{\phi}(u_k,v_k) -  {\phi}(\bar{u}_k,\bar{v}_k) & =  \int_\Omega \frac{1}{2} \big(Q^2_W(G(u_k,v_k)) - Q^2_W(G( \bar{u}_k,\bar{v}_k))  \big) \\
& = -\int_\Omega \C_W^2[G(u_k,v_k), G(\bar{u}_k,\bar{v}_k)  - G(u_k,v_k) ]  \\ 
& \ \ \ - \int_\Omega\frac{1}{2}Q^2_W \big(G(\bar{u}_k,\bar{v}_k)  - G(u_k,v_k) \big).
\end{align*}
Since  $G(u_k,v_k) -  G(\bar{u}_k,\bar{v}_k)$ converges strongly to $G(u,v) -  G(\bar{u},\bar{v})$  and $G(u_k,v_k)\rightharpoonup G(u,v)$ weakly in $L^2(\Omega;\R^{2 \times 2})$, we   get ${\phi}(u_k,v_k) -  {\phi}(\bar{u}_k,\bar{v}_k) \to {\phi}(u,v) -  {\phi}(\bar{u},\bar{v})$. This \BBB along with \eqref{eq: Dconv} \EEE  shows that  \eqref{eq: goedesi convexi} holds.

Having checked \eqref{basic assumptions2-new}-\eqref{eq: goedesi convexi}, we can now apply  Theorem \ref{th:abstract convergence-numerics}. This yields the existence of  time-discrete solutions and of a curve of maximal slope such that  convergence of time-discrete solutions holds with respect to $\sigma$. 
The fact that the  curve is a weak solution to  \eqref{eq: equation-simp}  follows from Theorem \ref{maintheorem2}. It remains to prove that the convergence of time-discrete solutions holds with respect to the strong topology $\rho$. 

To confirm the latter property, we use the principle that weak convergence together with energy convergence induces strong convergence. More specifically, given $(u_k,v_k) \stackrel{\sigma}{\to} (u,v)$ and $\phi(u_k,v_k) \to \phi(u,v) $, we argue as follows. Since $G(u_k,v_k) \rightharpoonup G(u,v) $ weakly in $L^2(\Omega;\R^{2 \times 2})$, we get by \eqref{eq: G and phi}(i) that
\begin{align*}
\int_\Omega\frac{1}{2}Q^2_W \big(G(u_k,v_k) - G(u,v)   \big) & = \int_\Omega \C_W^2[G(u,v), G(u,v)  - G(u_k,v_k) ] \\
& \ \ \ + {\phi}(u_k,v_k) -  {\phi}(u,v)  \to 0.
\end{align*}
Since $Q^2_W$ is positive definite on $\R_{\rm sym}^{2\times 2}$, \BBB see below \eqref{eq: order4}, \EEE we get ${\rm sym}(G(u_k,v_k))\to {\rm sym}(G(u,v))$ strongly in $L^2(\Omega;\R^{2 \times 2}_{\rm sym})$. Then, \BBB in view of \eqref{eq: Gdef}, \EEE using Poincar\'e's and  Korn's inequality, together with zero boundary conditions, it is elementary to check that $u_k \to u$ in $W^{1,2}(S;\R^2)$ and $v_k \to v$ in $W^{2,2}(S)$, i.e.,   $(u_k,v_k)  \stackrel{\rho}{\to} (u,v)$. 
\end{proof}

\begin{figure}[h]
 \begin{minipage}[c]{.48\textwidth}
        \centering
        \includegraphics[width=\textwidth]{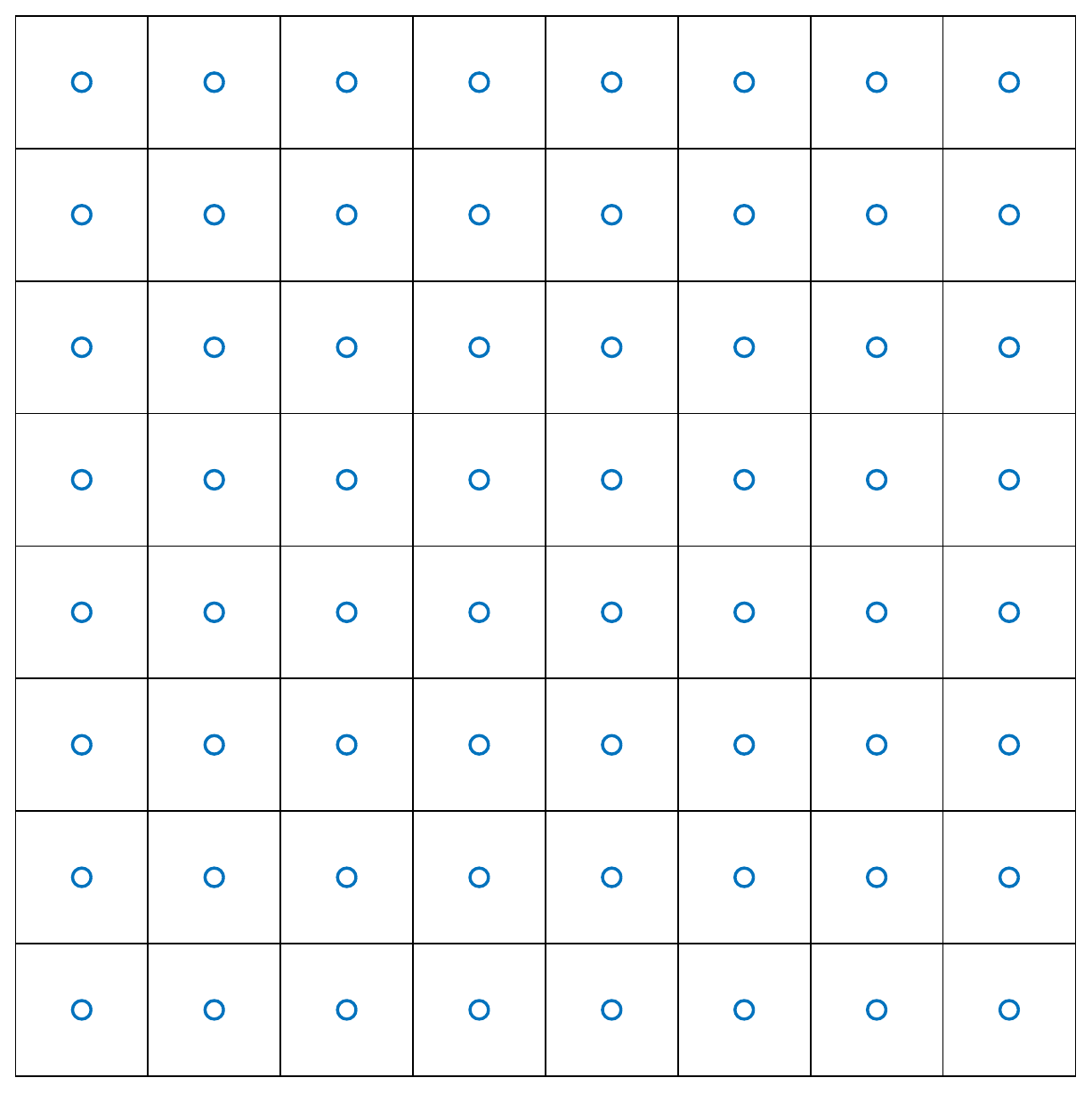}
 \end{minipage}  
  \hspace{0.01\textwidth}
  \begin{minipage}[c]{.48\textwidth}
        \centering
        \includegraphics[width=\textwidth]{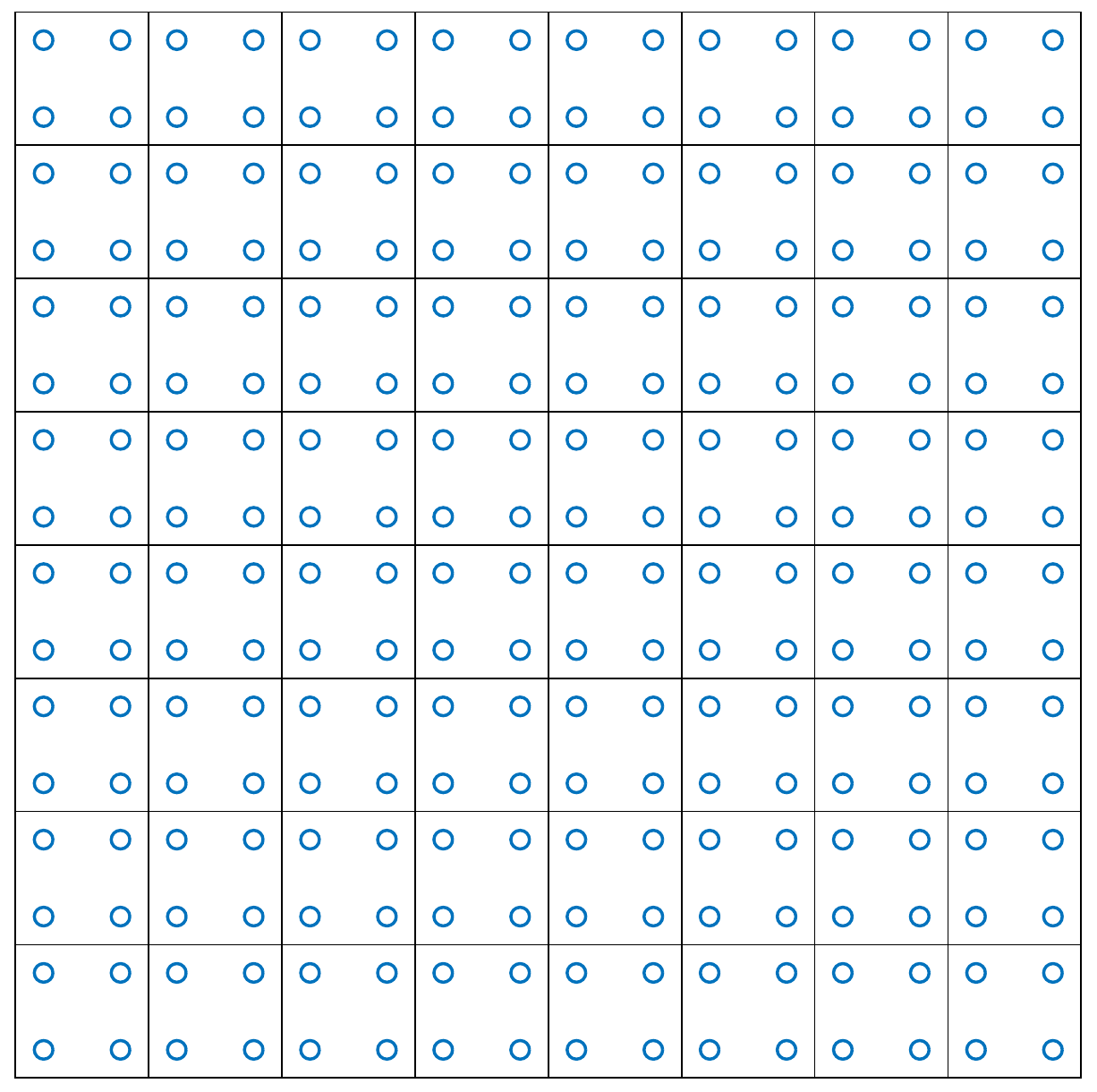}
 \end{minipage}  
  \caption{A rectangular mesh with Gauss integration points:  midpoints are used for evaluation of Q1 elements (left) and four points for evaluation of Bogner-Fox-Schmit elements  (right). 
}
   \label{fig:setup}
\end{figure}

\section{Numerical experiments}\label{numexp}
In this section we describe two numerical experiments on a homogeneous and isotropic viscoelastic plate.    Our computational strategy relies on \BBB a \EEE sequence of minimization problems
based on \eqref{eq: phi0} and \eqref{eq: D,D0-2}.  Take a time horizon $T>0$  and a time step $\tau>0$  such that $ n_{max} :=T/\tau\in\N$.  
Having an initial condition $(u_{0},v_{0})\in\mathscr{S}_k$ \BBB in a finite element space $\mathscr{S}_k$ detailed below, \EEE we find, for $1 \leq n \leq n_{max}$,  a solution  $(u_{n},v_{n})\in\mathscr{S}_k$  of the following problem 
\begin{align}\label{eq: ds-new1NNN}
&\text{minimize } \phi(u,v) +\frac{1}{2\tau}\mathcal{D}^2( (u_{n-1},v_{n-1}),(u,v))\nonumber\\
& \text{subject to } (u,v)\in {\mathscr{S}_k},
\end{align}
where $\phi$ and $\mathcal{D}$ are defined in \eqref{eq: phi0} and \eqref{eq: D,D0-2}, respectively. 
As $Q^2_W$ we take an isotropic material, i.e., 
$$Q^2_W(G ):= \lambda {\rm tr}^2(G)+ 2\mu|G|^2,$$
and 
$$Q^2_D(G):= 4c|G|^2\ , \ c>0\ $$
  for every  symmetric $G\in \R^{2\times 2}_{\rm sym}$. The constants $\lambda, \mu$ are Lam\'e constants  and $c>0$ represents a viscosity parameter.

A finite element method (FEM) is applied for the numerical approximation of the in-plane and  out-of-plane displacement fields $u$ and $v$. This space corresponds to $\mathscr{S}_k$, for some $k \in \N$ large enough, as considered in Section \ref{sec:FEM}. We assume a uniform rectangular mesh in 2D discretizing a square domain

$$S = (-1 ,1 ) \times (-1, 1) $$ into square elements with the edge of the length $h=1/k$ 
 and further approximate:
\begin{enumerate}
\item a vector function $u = (u_1, u_2)$ by $\text{Q1}_k$ elements (elementwise bilinear and globally continuous)  in each component $u_1$ and $u_2$,
\item a scalar function $v$  by the Bogner-Fox-Schmit ($\text{BFS}_k$) rectangular elements \cite{BoFoSc65}, i.e., a bi-cubic Hermite elements,  that provide globally  $C^1$ approximations. 
\end{enumerate}
\BBB We \EEE define $\mathscr{S}_k= \{(u,v)\in \text{Q1}_k\times\text{BFS}_k\}$.  The $\rho$-denseness of $\bigcup_k\mathscr{S}_k$ in $\mathscr{S}$ follows from the properties of the finite-element interpolants, see \cite[Thms.~3.2.3 and 6.1.7]{Ciarlet-FEM} or \cite{BS}.   A rectangular mesh with
81 nodes and 64 rectangles is given in Figure \ref{fig:setup} together with Gauss integration points used in quadrature formulas. 

\begin{figure}
    \centering
    \begin{subfigure}[b]{0.49\textwidth}
    \centering
        \includegraphics[width=0.85\textwidth]
        {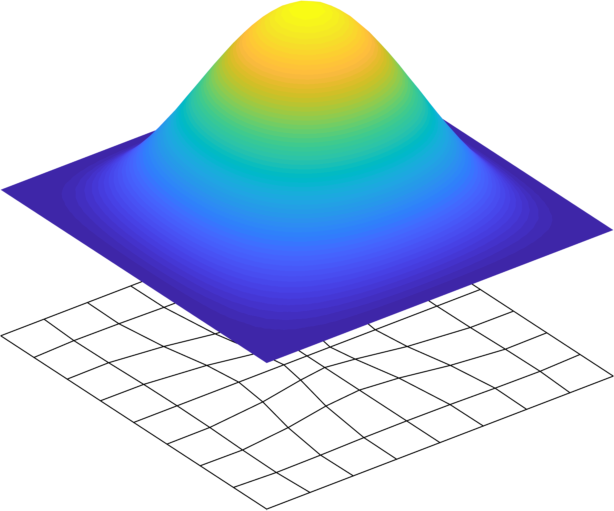}
        \caption{t=1}
    \end{subfigure}
    \begin{subfigure}[b]{0.49\textwidth}
    \centering
        \includegraphics[width=0.85\textwidth]
          {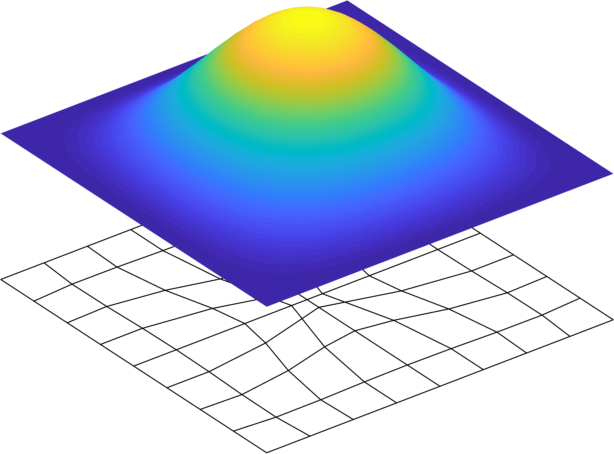}
        \caption{t=2}
    \end{subfigure}
     \begin{subfigure}[b]{0.49\textwidth}
    \centering
        \includegraphics[width=0.85\textwidth]
          {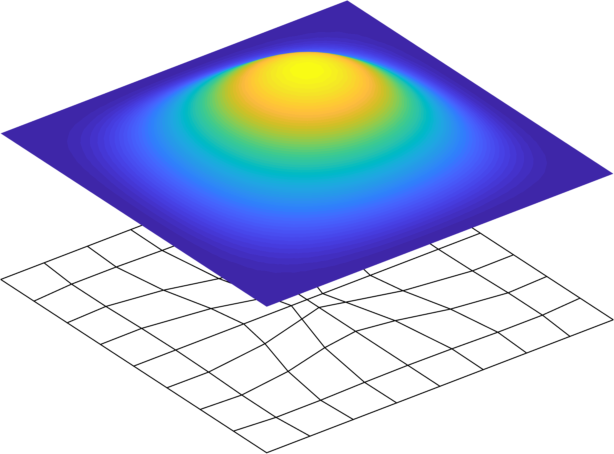}
        \caption{t=3}
    \end{subfigure}
    \begin{subfigure}[b]{0.49\textwidth}
    \centering
        \includegraphics[width=0.85\textwidth]
          {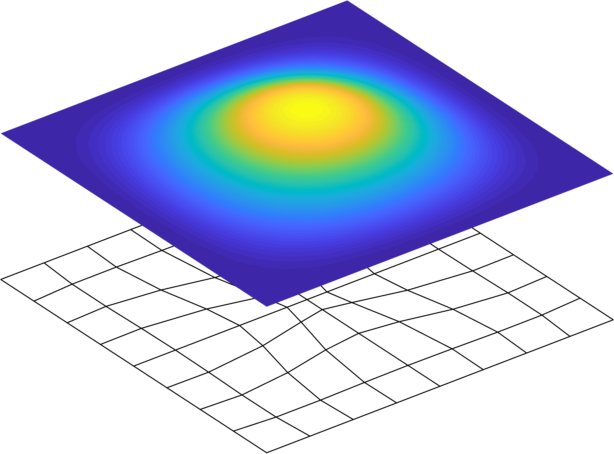}
        \caption{t=4}
    \end{subfigure}
    \begin{subfigure}[b]{0.49\textwidth}
    \centering
        \includegraphics[width=0.85\textwidth]
          {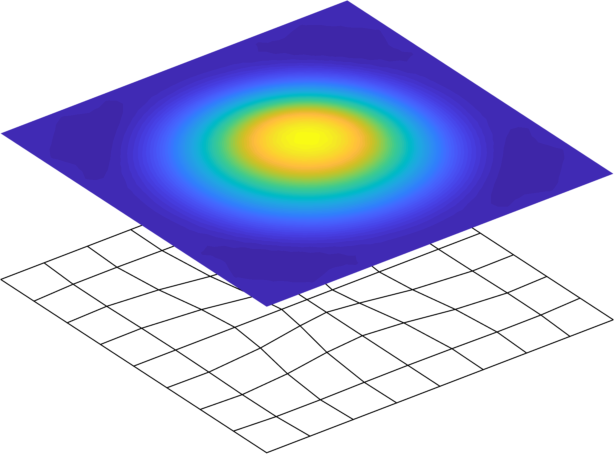}
        \caption{t=5}
    \end{subfigure}
    \begin{subfigure}[b]{0.49\textwidth}
    \centering
        \includegraphics[width=0.85\textwidth]
          {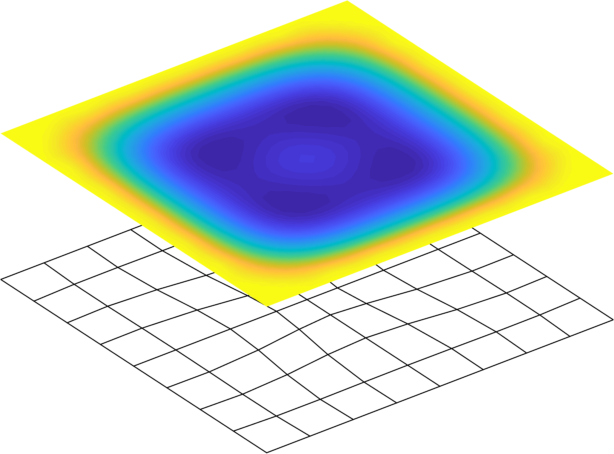}
        \caption{t=6}
    \end{subfigure}
     \begin{subfigure}[b]{0.49\textwidth}
    \centering
        \includegraphics[width=0.85\textwidth]
          {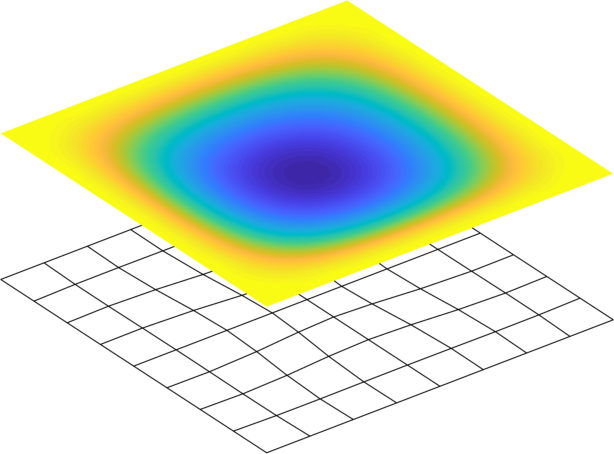}
        \caption{t=7}
    \end{subfigure}
    \begin{subfigure}[b]{0.49\textwidth}
    \centering
        \includegraphics[width=0.85\textwidth]
          {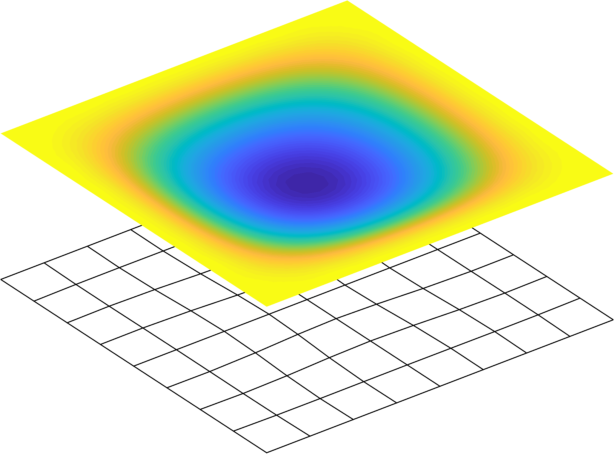}
        \caption{t=8}
    \end{subfigure}
 \caption{
    Time sequence of energy minimizers $(u,v)$ in Benchmark I. To emphasize the mesh deformation, the nodes displacement is magnified by a factor of 4. }
   \label{fig:benchmark1_time_sequence}
\end{figure}

\begin{figure}[htbp]
    \centering
    \begin{subfigure}[b]{0.49\textwidth}
    \centering
        \includegraphics[width=0.78\textwidth]
        {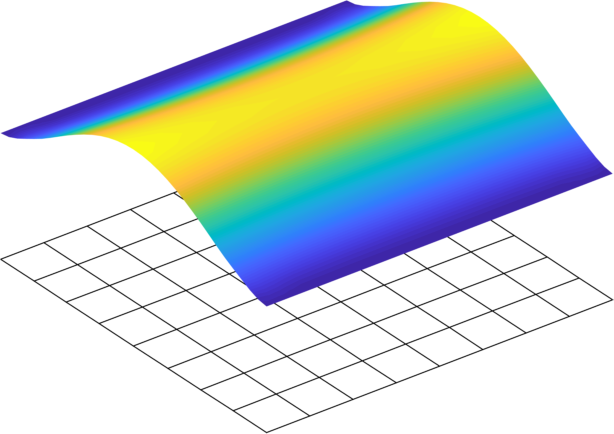}
        \caption{t=1}
    \end{subfigure}
    \begin{subfigure}[b]{0.49\textwidth}
    \centering
        \includegraphics[width=0.78\textwidth] {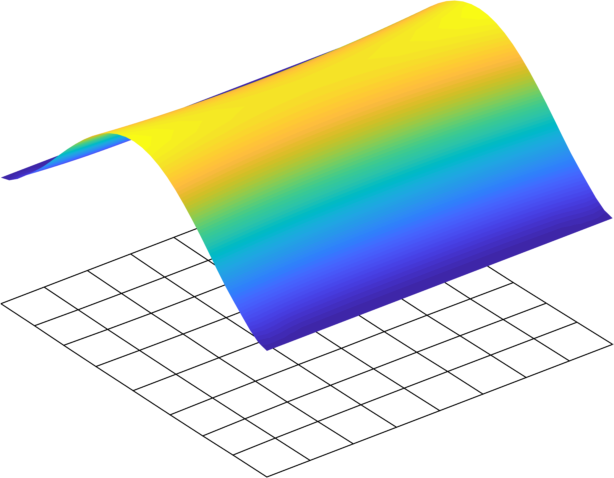}
        \caption{t=2}
    \end{subfigure}
     \begin{subfigure}[b]{0.49\textwidth}
    \centering
        \includegraphics[width=0.78\textwidth] {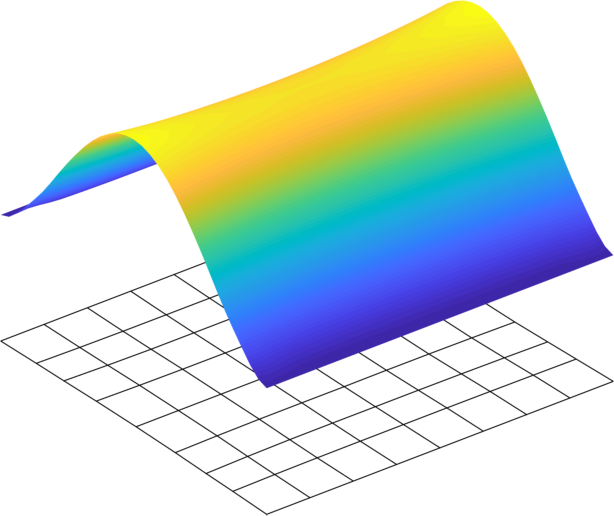}
        \caption{t=3}
    \end{subfigure}
    \begin{subfigure}[b]{0.49\textwidth}
    \centering
        \includegraphics[width=0.78\textwidth] {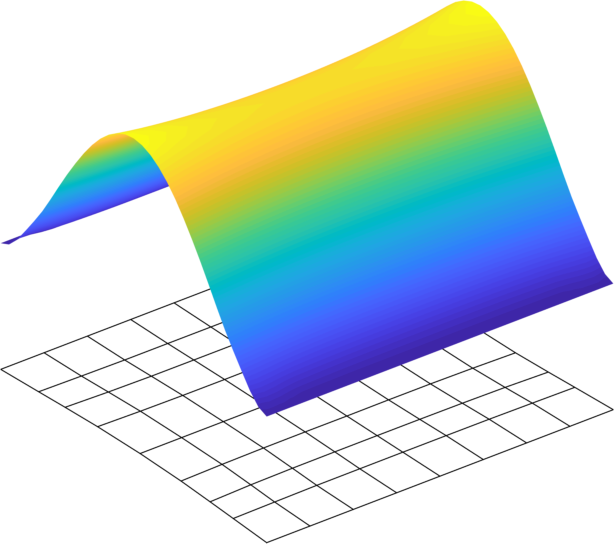}
        \caption{t=4}
    \end{subfigure}
    \begin{subfigure}[b]{0.49\textwidth}
    \centering
        \includegraphics[width=0.78\textwidth] {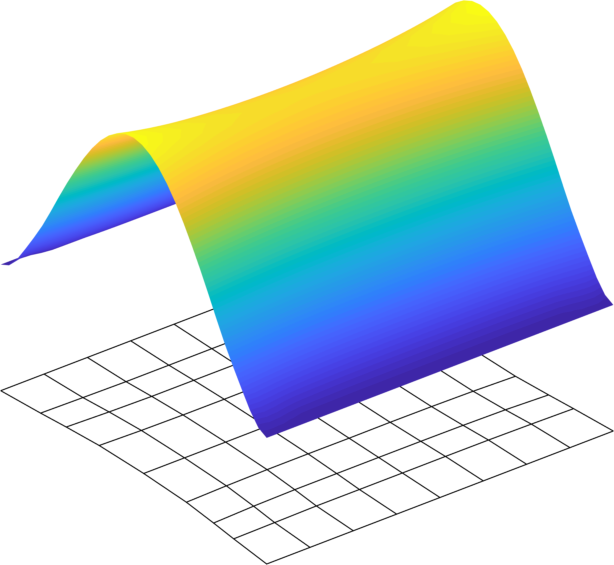}
        \caption{t=5}
    \end{subfigure}
    \begin{subfigure}[b]{0.49\textwidth}
    \centering
        \includegraphics[width=0.78\textwidth] {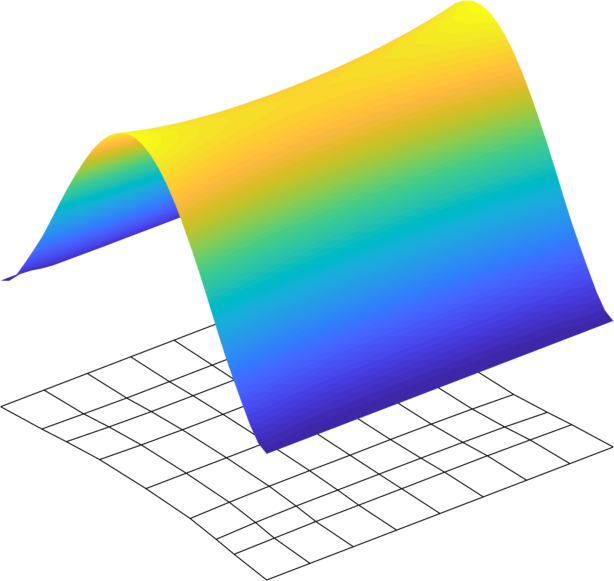}
        \caption{t=6}
    \end{subfigure}
    \begin{subfigure}[b]{0.49\textwidth}
   \centering
        \includegraphics[width=0.78\textwidth] {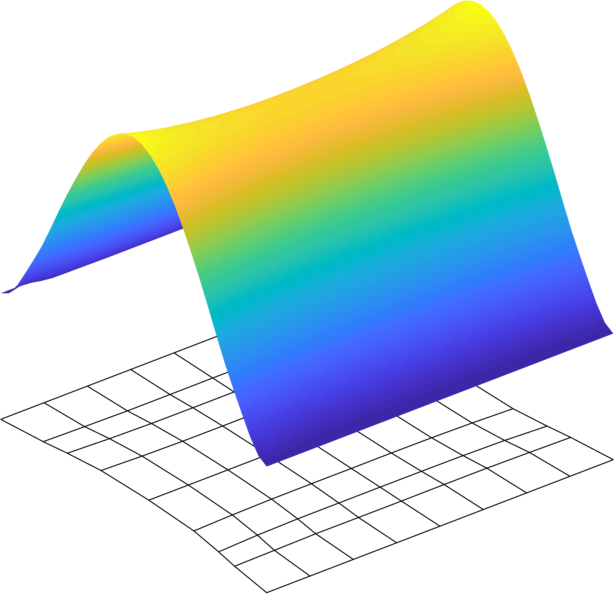}
        \caption{t=7}
    \end{subfigure}
    \begin{subfigure}[b]{0.49\textwidth}
    \centering
        \includegraphics[width=0.78\textwidth] {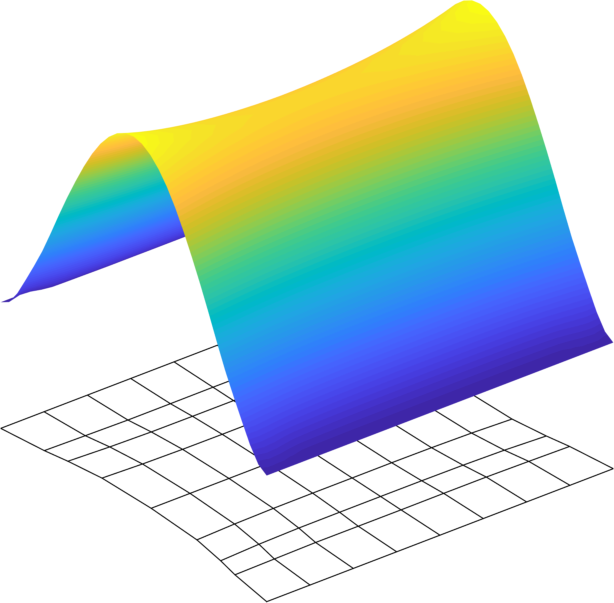}
        \caption{t=8}
    \end{subfigure}
 \caption{
    Time sequence of energy minimizers $(u,v)$ in Benchmark II. To emphasize the mesh deformation, the nodes displacement is magnified by a factor of 7. }
   \label{fig:benchmark2_time_sequence}
\end{figure}

\subsection{Benchmark I}
We consider a time sequence of minimization problems with the time step $\tau=1$, Lam\'e parameters $\lambda = \mu = 1e3$, the viscosity parameter $c=3e3$, and the constant volume force $f=-1e3$.  The initial condition is  given by 
\begin{equation}
u_0=(0, 0), \quad  v_0= (1-x_1^2)^2(1-x_2^2)^2 \quad \mbox{for } x=(x_1, x_2) \in S, 
   \label{benchI:init}
\end{equation}
and the boundary condition for $t \geq 0$  by
\begin{equation*}
u(t, x)=(0,0), \quad v(t,x)=0, \quad \nabla v(t,x)=(0,0) \qquad \mbox{for } x \in \partial S. 
\end{equation*}
Although the choice of parameters above is not physically relevant, the meaning of this benchmark is clear: The initial condition $(u_0, v_0)$ is not equilibrated and therefore after some time (the speed of this transition is driven by the viscosity constant $c$)  the energy stabilizes at its equilibrium given by the volume force oriented in the gravity direction. The  first 8 minimizers are displayed in Figure \ref{fig:benchmark1_time_sequence}.  The scalar field $v$ is displayed as a vertical plate deformation, whereas the vector displacement field $u$ is displayed as a deformed mesh. 

\subsection{Benchmark II}
The theoretical part of this paper covers  boundary conditions defined on the full boundary of  $S$  only.  The computer simulations, however, are  possible also for boundary conditions given on a part of the boundary $\partial S $. 
We consider a time sequence of minimization problems with the time step $\tau=1$, Lam\'e parameters $\lambda = \mu = 1e3$, the viscosity parameter $c=3e3$, and the constant volume force $f=1e2$.  The initial condition is  given by 
\begin{equation*}
u_0=(0, 0), \quad  v_0= 0 \quad \mbox{for } x \in S, 
\end{equation*}
and the boundary condition for $t \geq 0$  by
\begin{equation*}
u(t, x)=(0,0), \quad v(t,x)=0, \quad \nabla v(t,x)=(0,0) \quad \mbox{for } x \in (-1,1) \times \{-1, 1 \},
\end{equation*}
so boundary conditions are given on the lower and upper parts of the domain boundary only.
The  first 8 minimizers are displayed in Figure \ref{fig:benchmark2_time_sequence}. 

\begin{figure}
    \centering
    \begin{subfigure}[b]{0.49\textwidth}
    \centering
        \includegraphics[width=0.855\textwidth]{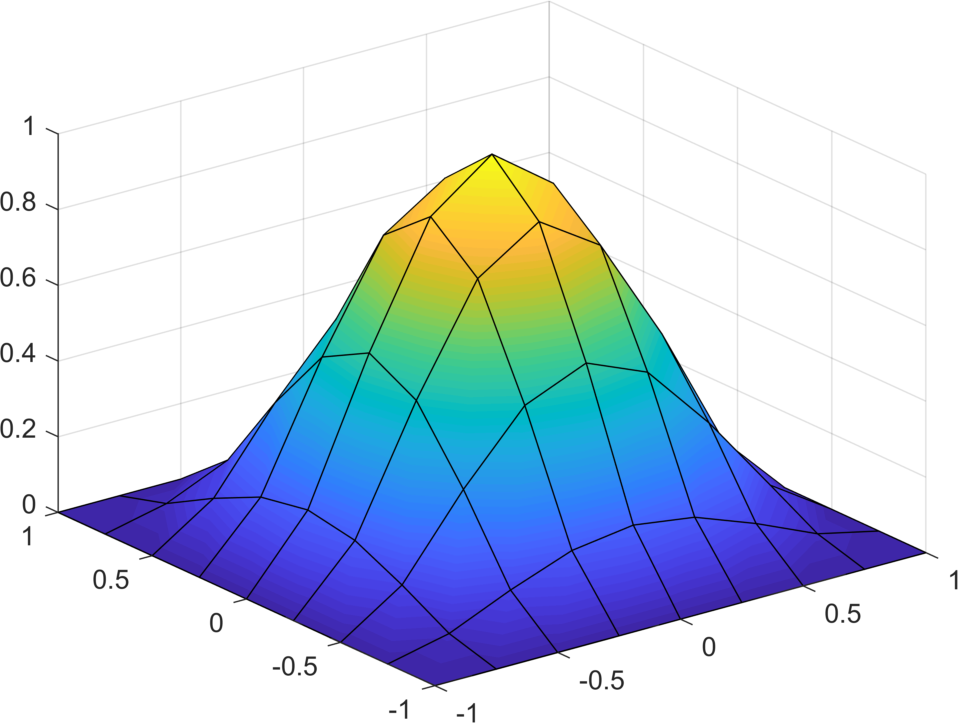}
        \caption{$v_0$}
    \end{subfigure}
    \begin{subfigure}[b]{0.49\textwidth}
    \centering
        \includegraphics[width=0.85\textwidth]
        {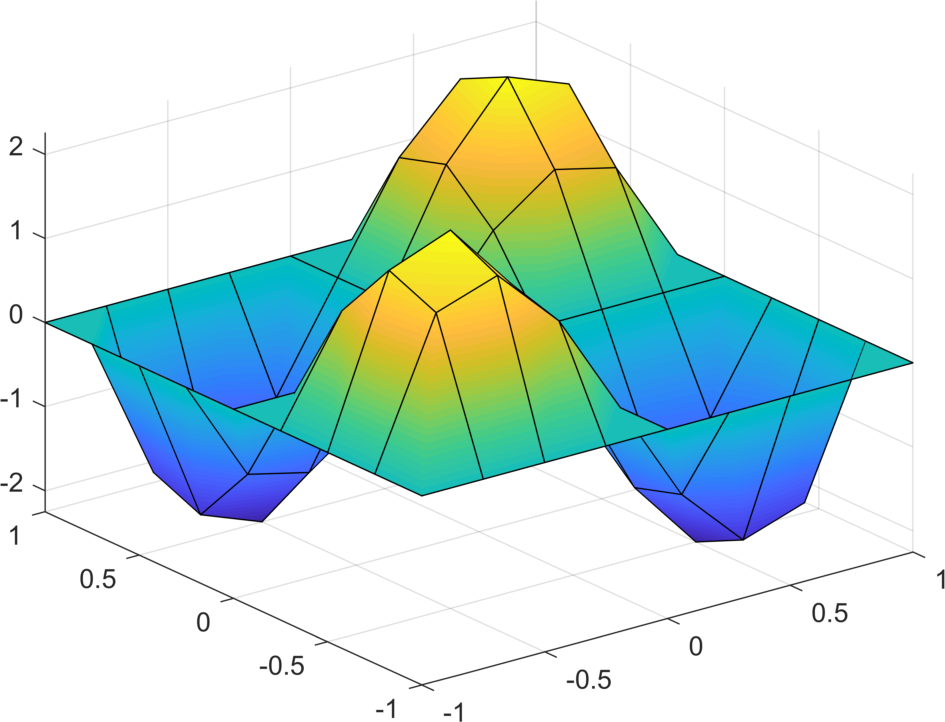}
       \caption{$\frac{\partial^2 v_0}{\partial x_1 \partial x_2}$}
    \end{subfigure}
     \begin{subfigure}[b]{0.49\textwidth}
    \centering
        \includegraphics[width=0.85\textwidth]{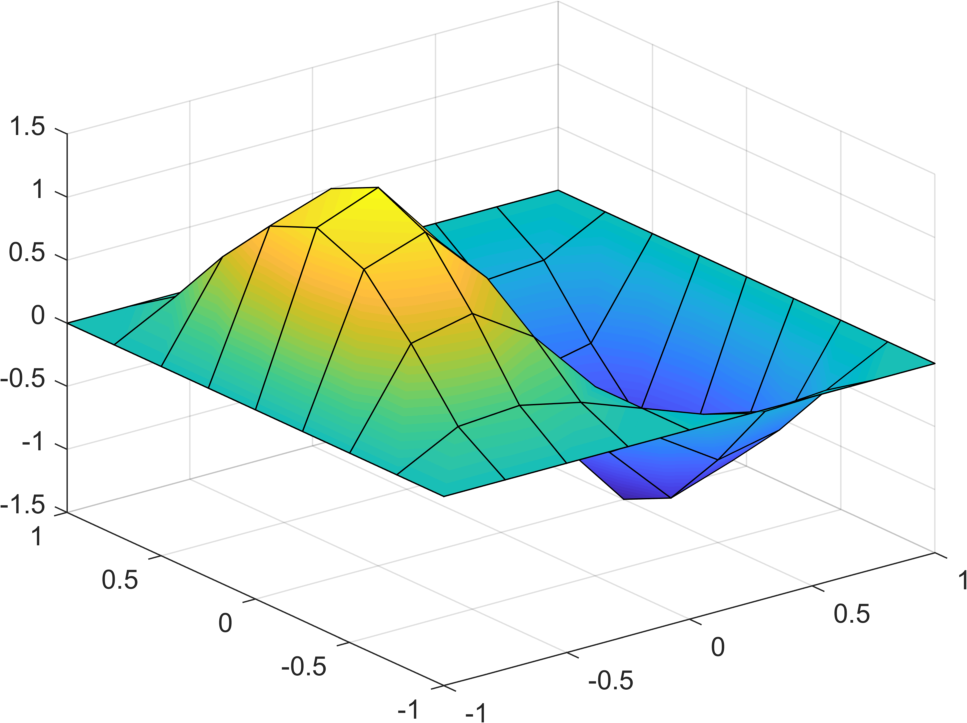}
 \caption{$\frac{\partial v_0}{\partial x_1}$}        
    \end{subfigure}
    \begin{subfigure}[b]{0.49\textwidth}
    \centering
        \includegraphics[width=0.85\textwidth]
        {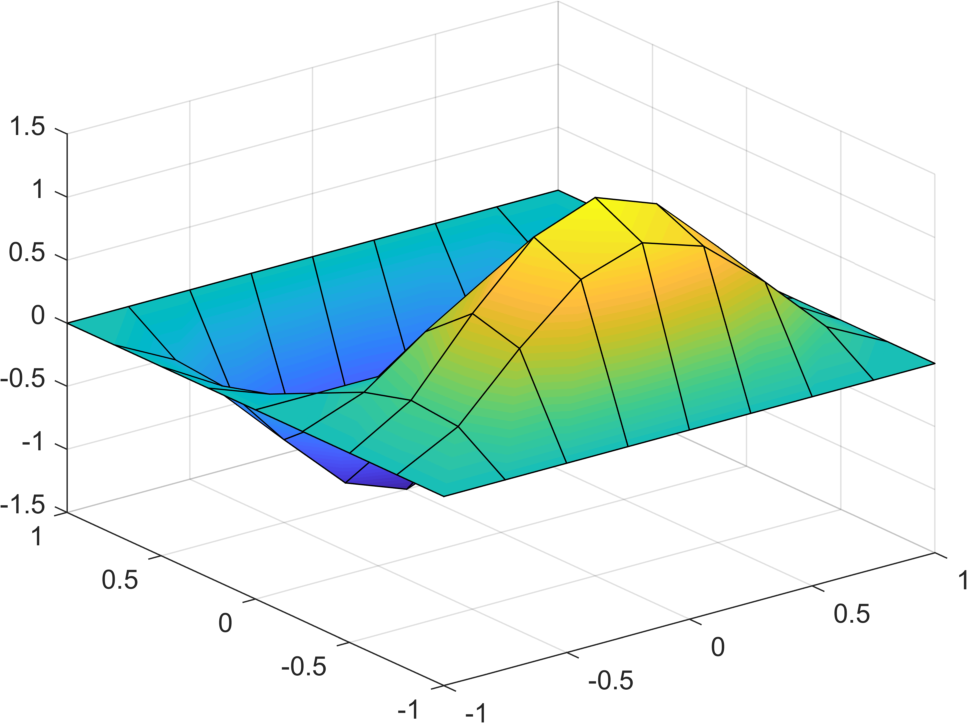}
         \caption{$\frac{\partial v_0}{\partial x_2}$ }
    \end{subfigure}
 \caption{Example of $C^1$ approximation by the  Bogner-Fox-Schmit (BFS) rectangular elements:  the initial function $v_0$ from Benchmark 1 is represented by its value, its gradient and the second mixed derivative {\cm in all mesh nodes.  
 }}
  \label{fig:example_v}
\end{figure}

\subsection{Implementation details}
Our Matlab implementation is based on former vectorized codes of \cite{ AnVa15,HaVa14,RaVa13} that allow for a fast assembly of various finite element matrices. 
{\cm
The code is available at
\begin{center}
\url{https://www.mathworks.com/matlabcentral/fileexchange/72991} 
\end{center}}
{\flushleft
for} download. It includes an own implementation of the Bogner-Fox-Schmit (BFS) rectangular elements for a uniformly refined rectangular mesh, where all rectangular elements are for simplicity of the same size $hx_1 \times hx_2$ (in our computations $hx_1 = hx_2 = h$).
The basis functions on each rectangle are based on bicubic polynomials, i.e., tensor products of 4 cubic (Hermite) polynomials. They have 16 degrees of freedom with 4 degrees in each of its 4 corner nodes approximating: a function value, its gradient (two components), and the second mixed derivative. 
Therefore, the initial function $v_0$ must have all these fields available in our simulations. Figure \ref{fig:example_v} depicts $v_0$ from \eqref{benchI:init} represented in terms of BFS elements. We recall that BFS elements were also successfully tested in \cite{KrVa18} {\cm and their implementation was explained in detail in \cite{Valdman}}.



\section*{Acknowledgments} The research of MF was supported  by the Deutsche Forschungsgemeinschaft (DFG, German Research Foundation) under Germany's Excellence Strategy EXC 2044 -390685587, Mathematics M\"unster: ``Dynamics--Geometry--Structure''.  MK and JV acknowledge the support by GA\v{C}R project  17-04301S.

\medskip

\medskip

\end{document}